\documentclass{article}
\usepackage[utf8]{inputenc}
\usepackage{authblk}
\usepackage{mathtools,bm}
\usepackage{color}
\usepackage{xcolor}
\usepackage[section]{placeins}

\usepackage{latexsym,upgreek}
\usepackage{amsfonts,psfrag,amsmath,cancel,epstopdf,multirow,algorithm,algpseudocode,url,caption,threeparttable,ragged2e,wrapfig,overpic,subcaption,graphicx,amsthm,amssymb}

\usepackage[colorlinks=true, pdfstartview=FitV, linkcolor=blue,citecolor=blue, urlcolor=blue]{hyperref}
\newcommand*{\email}[1]{\href{mailto:#1}{\nolinkurl{#1}}}

\title{A Two-Level Galerkin Reduced Order Model for the Steady Navier-Stokes Equations}
\author[1]{Dylan Park}
\author[2]{Changhong Mou}
\author[1]{Honghu Liu}
\author[3]{Adrian Sandu}
\author[1]{Traian Iliescu}
\affil[1]{Department of Mathematics, Virginia Tech, Blacksburg, VA 24061
{\em (\email{dylantp@vt.edu}, \email{hhliu@vt.edu}, \email{iliescu@vt.edu})}}
\affil[2]{University of Wisconsin-Madison, Madison, WI 53706 {\em (\email{cmou3@wisc.edu})}}
\affil[3]{Department of Computer Science, Virginia Tech, Blacksburg, VA 24061 {\em (\email{sandu@cs.vt.edu})}}

\date{\today}


\newtheorem{remark}{Remark}[section]
\newtheorem{lemma}{Lemma}[section]
\newtheorem{theorem}{Theorem}[section]

\newtheorem{proposition}{Proposition}[section]

\def\PP{{{\rm l}\kern - .15em {\rm P} }}
\def\PN2{{\PP_{N}-\PP_{N-2}}}



\newcommand{\cE}{\mathcal{E}}









\definecolor{vargreen}{rgb}{0.0, 0.5, 0.0}

\newcommand{\deleted}[1]{{}}


\begin{document}

\graphicspath{{../figures/}}

\maketitle

\begin{abstract}
We propose, analyze, and investigate numerically a novel two-level Galerkin reduced order model (2L-ROM) for the efficient and accurate numerical simulation of the steady Navier-Stokes equations.
In the first step of the 2L-ROM, a relatively low-dimensional nonlinear system is solved.
In the second step, the Navier-Stokes equations are linearized around the solution found in the first step, and a higher-dimensional system for the linearized problem is solved.
We prove an error bound for the new 2L-ROM and compare it to the standard one level ROM (1L-ROM) in the numerical simulation of the steady Burgers equation.
The 2L-ROM significantly decreases (by a factor of $2$ and even $3$) the 1L-ROM computational cost, without compromising its numerical accuracy.

\end{abstract}

\section{Introduction}
    \label{sec:introduction}

Two-level methods have been used to reduce the computational cost of classical numerical methods, e.g., the finite element method (FEM), for nonlinear problems, such as the Navier-Stokes equations~\cite{axe961,dawson1998two,layton1993two,layton1999two,Xu94,Xu96}.
In the FEM setting, the two-level methods can be summarized as follows: 
\begin{itemize}
    \item[(I)] In the first step, solve the nonlinear problem on a {\it coarse} mesh.
    \item[(II)] In the second step, linearize the nonlinear problem around the solution obtained in step (I), and then solve the resulting {\it linear} problem on the {\it fine} mesh.
\end{itemize}

The {\it computational cost} of the two-level method is significantly lower than the cost of the standard, one-level method (i.e., solving the nonlinear problem on the fine mesh).
Indeed, the one-level method is generally expensive, since it solves the nonlinear problem on the fine mesh, which can require many (e.g., $\mathcal{O}(10)-\mathcal{O}(10^2)$) iterations in a nonlinear solver (e.g., a Newton iteration). 
In contrast, the two-level method solves the nonlinear problem only on a coarse mesh, which is much more efficient than solving it on the fine mesh.
Of course, the two-level method also solves a linear system on the fine mesh, but its cost is much lower than the cost of solving the nonlinear problem on the fine mesh.

Not only is the two-level method significantly more efficient than the standard one-level method, but it is also as accurate as the latter.
Indeed, by performing a rigorous numerical analysis and  carefully choosing the scaling between the coarse and fine mesh sizes, one can ensure that {\it the convergence rates of the two-level and one-level methods are the same}~\cite{layton1993two}. 

Two-level methods have been successfully used for efficient numerical simulations of a wide variety of challenging nonlinear problems, e.g., the Navier-Stokes equations~\cite{fairag1998two,fairag2003numerical,layton1993two}, the Smagorinsky model used in large eddy simulation of turbulent flows~\cite{borggaard2008two}, the quasi-geostrophic equations (QGE) modeling the large scale ocean circulation~\cite{foster2013two,foster2016conforming}, and viscoelastic fluid flows~\cite{liakos2003two}. 

Despite their success in a FEM setting, to our knowledge, {\it the two-level method has not been used in a reduced order modeling setting}.
In this paper, we take a step toward filling this gap and propose a two-level method for Galerkin reduced order models of the steady Navier-Stokes equations.

{Galerkin reduced order models (G-ROMs)} are computational models that significantly decrease the dimension (and, thus, computational cost) of full order models (FOMs), i.e., models obtained from classical numerical discretizations (e.g., FEM).
The G-ROM dimension is generally $\mathcal{O}(10^3)-\mathcal{O}(10^4)$ lower than the FOM dimension.
(In special cases, the decrease can be even higher.)
When the original FOM dimension is $\mathcal{O}(10^4)-\mathcal{O}(10^5)$, the corresponding ROM is a low-dimensional model that can be efficiently run on a laptop using standard numerical discretizations.
We emphasize, however, that in challenging applications (e.g., turbulent flows, combustion, nuclear engineering, and geophysics), FOMs can require {\it billions and even hundreds of billions of degrees of freedom}~\cite{fischer2022nekrs}.
In those cases, the corresponding ROMs are {\it not} low-dimensional: they can require hundreds or thousands (or even more) basis functions (see, e.g, Table II in~\cite{ahmed2021closures} for examples in the numerical simulation of the atmospheric boundary layer).
Thus, in these settings, the {\it efficient} and accurate numerical discretization of these relatively high-dimensional G-ROMs becomes {\it critical}.

In this paper, we propose, analyze, and investigate numerically a new two-level G-ROM for the efficient and accurate numerical simulation of the steady Navier-Stokes equations (and related systems, e.g., Burgers equation).
To our knowledge, the first two-level G-ROM was proposed in~\cite{wang2011two} for the efficient numerical simulation of a ROM closure model~\cite{ahmed2021closures} (i.e., the Smagorinsky model).
This strategy was later used in~\cite{gaonkar2015application}.
Recently, a nonintrusive two-grid G-ROM was proposed in~\cite{grosjean2022variations,grosjean2022doubly,grosjean2022error}.
A two-grid approach for G-ROM adaptivity was used in~\cite{dai2020two}.
A two-level G-ROM approach to construct spatial and temporal basis functions was used in~\cite{audouze2013nonintrusive,chen2018greedy}.

We emphasize that our two-level G-ROM is fundamentally different from the approaches used in~\cite{audouze2013nonintrusive,chen2018greedy,dai2020two,gaonkar2015application,grosjean2022variations,grosjean2022doubly,grosjean2022error,wang2011two}.
Indeed, these approaches use two spatial (and/or temporal) grids.
In contrast, the two-level G-ROM proposed in this paper uses just one spatial grid and one temporal grid, and two ROM spaces.
Thus, the approaches in~\cite{audouze2013nonintrusive,chen2018greedy,dai2020two,gaonkar2015application,grosjean2022variations,grosjean2022doubly,grosjean2022error,wang2011two} are ``two-grid" ROMs, whereas the strategy proposed in this paper is a ``two-space" ROM.

The rest of the paper is organized as follows:
In Section~\ref{sec:preliminaries}, we present the mathematical formulation, the ROM basis construction, and the standard one-level G-ROM.
In Section~\ref{sec:2l-rom}, we outline the construction of the novel two-level G-ROM.
In Section~\ref{sec:theoretical-results}, we prove an error bound for the two-level G-ROM.
In Section~\ref{sec:numerical-results}, we compare the new two-level G-ROM with the standard one-level G-ROM in the numerical simulation of the steady Burgers equation.
Finally, in Section~\ref{sec:conclusions}, we draw conclusions and outline future research directions.

\section{
Preliminaries
}
    \label{sec:preliminaries}

We will present the theoretical investigation of the two-level G-ROM approach using the steady Navier-Stokes equations (NSE), which describe the flow of an incompressible fluid. In this section, we recall the corresponding weak formulation and the classical one-level G-ROM of the NSE based on proper orthogonal decomposition (POD)~\cite{HLB96,volkwein2013proper}.

\subsection{Mathematical Formulation}
We consider the steady NSE posed on a bounded domain $\Omega \subset \mathbb{R}^{d}$ ($d = 2, 3$) with Lipschitz-continuous boundary: 
\begin{align}
    \label{eqn:NSE}
    \begin{split}
        -Re^{-1} \Delta u + (u \cdot \nabla)u + \nabla p & = f, \quad \textnormal{in } \Omega, \\
        \nabla \cdot u & = 0, \quad \textnormal{in } \Omega, \\
        u & = 0, \quad \textnormal{on } \partial \Omega.
    \end{split}
\end{align}

The weak formulation of \eqref{eqn:NSE} makes use of the following function spaces (see,  e.g.,~\cite[Chapter 6]{layton2008introduction})
\begin{equation} \label{Eq_function_spaces}
\begin{aligned}
 & X \coloneqq (H_0^1(\Omega))^{d}, 
 \\
 & Q := L_0^2(\Omega) =  \left\{q \colon \Omega \rightarrow \mathbb{R} \; \vert \; q \in L^2(\Omega) \text{ and } \int_{\Omega}
 q(x)\,dx = 0 \right\},
\end{aligned}
\end{equation}
where
\begin{equation}
H_0^1(\Omega) = \left\{v \colon \Omega \rightarrow \mathbb{R} \; \vert \; v \in L^2(\Omega), \nabla v  \in (L^2(\Omega))^{d}, \text{ and } v = 0 \text{ on } \partial \Omega \right\}.
\end{equation} 
These spaces, $X$ for the velocity and $Q$ for the pressure, are endowed with the usual Sobolev norms and inner products. 
In the following, we use $\|\cdot\|_0$ to denote the $L^2$-norm, modified in the usual manner for vector-valued functions, and we denote the semi-norm $\|\nabla v\|_0$ of any $v$ in $(H^1(\Omega))^{d}$ by $|v|_1$. 
Note that since the pressure $p$ is determined up to an additive constant, it is normalized to have mean value zero by requiring $\int_{\Omega}p(x)\,dx = 0$. We seek $(u,p) \in (X,Q)$ satisfying the following system for all $(v,q) \in (X,Q)$:
\begin{subequations}
    \label{eqn:NSE_Weak_Formulation}
    \begin{align}
        & a(u,v) + b_{SKEW}(u;u,v) + c(p,v) = (f,v), \label{eqn:NSE_Weak_Formulation_part1}\\
        & c(q,u) = 0, \label{eqn:NSE_Weak_Formulation_part2}
        \end{align}
\end{subequations}
where 
\begin{align}
  &  a(u,v) \coloneqq Re^{-1}\int_{\Omega} (\nabla u):(\nabla v) \,dx, \\
   & b(u;v,w) \coloneqq \int_{\Omega} (u \cdot \nabla)v \cdot w \,dx, \label{Def_b_term}\\
   & b_{SKEW}(u;v,w) \coloneqq \frac{1}{2}[b(u;v,w) - b(u;w,v)], \label{Def_b_skew_term} \\
    & c(p,v) \coloneqq 
    - \int_{\Omega}(\nabla \cdot v)p \,dx, \quad \textnormal{and} \quad (f,v) \coloneqq 
    \int_{\Omega}f\cdot v \,dx.
\end{align}

\subsection{ROM Basis} \label{Sec_ROM_basis}
To build the ROM basis, we assume that we have access to the numerical solutions of \eqref{eqn:NSE_Weak_Formulation} for a set of Reynolds numbers, $\{Re_1, \ldots, Re_M\}$, within a given interval. We assume that such solutions are obtained based on a 
FEM discretization of \eqref{eqn:NSE_Weak_Formulation}, which satisfies the {\it discretely divergence-free condition} for the velocity. 

We denote such FEM solutions as 
\begin{equation} \label{Eq_FEM_soln}
\{u^1_h, \ldots, u^M_h\},
\end{equation}
where each $u^j_h$, also called a snapshot hereafter, denotes the computed solution associated with the $j$-th Reynolds number, $Re_j$, for a given FE mesh size $h$. 
The number of snapshots, $M$, is an arbitrarily fixed sufficiently large positive integer. The ROM basis used in this article 
consists of the POD modes \cite{HLB96,volkwein2013proper} constructed from the above set of snapshots. 

Denoting the obtained orthonormal ROM basis by $\{\varphi_1, \ldots, \varphi_{\ell}\}$, where $\ell$ is the number of linearly independent snapshots in \eqref{Eq_FEM_soln}, we obtain the ROM space $X^{\ell}$ defined as follows:
\begin{equation} \label{Eq_ROM_space}
X^{\ell} := \mathrm{span}\{\varphi_1, \ldots, \varphi_{\ell} \}.
\end{equation} 
Note that $X^{\ell}$ corresponds to the same space spanned by the given snapshots in \eqref{Eq_FEM_soln}.

\subsection{One-Level Method}
With the ROM basis at hand, the classical Galerkin ROM for a given dimension $R \le \ell$ can be readily constructed and is given by \eqref{eqn:Alg_Nonlin_1L} in Algorithm~\ref{alg:one_level_ROM}.  Thanks to the discretely divergence-free condition assumed for the FEM, the pressure term $c(p,v)$ in \eqref{eqn:NSE_Weak_Formulation} vanishes in the Galerkin ROM since $c(p,v) = 0$ for all $v$ in $X^{\ell}$ (see, e.g.,~\cite{hesthaven2015certified,quarteroni2015reduced} for alternative approaches). To distinguish it from 
the two-level ROMs proposed in 
Section~\ref{sec:2l-rom}, we will call the classical Galerkin ROM given by \eqref{eqn:Alg_Nonlin_1L} the $R$-dimensional one-level ROM (1L-ROM).
To distinguish it from the two-level ROM solution, we denote the one-level ROM solution with $u_{1L}^{R}$.

\begin{algorithm}[tbh]
\caption{One-Level ROM (1L-ROM) Algorithm}
\label{alg:one_level_ROM}
Solve the following nonlinear system with $R$ POD basis
functions. We seek $u_{1L}^R \in X^R:= \mathrm{span}\{\varphi_1, \ldots, \varphi_R\}$ satisfying for all $v^R \in X^R$
\begin{align}
    \label{eqn:Alg_Nonlin_1L}
    \begin{split}
        a(u_{1L}^R,v^R) + b_{SKEW}(u_{1L}^R;u_{1L}^R,v^R) = (f,v^R). \\
    \end{split}
\end{align}
\end{algorithm}

\section{Two-Level Method}
    \label{sec:2l-rom}

In challenging applications (e.g., turbulent flows, combustion, nuclear engineering, and geophysics), the standard 1L-ROM given by \eqref{eqn:Alg_Nonlin_1L} (i.e., the Galerkin ROM) can require hundreds or thousands (or even more) basis functions (see, e.g, Table II in~\cite{ahmed2021closures}).
In these settings, 1L-ROM can require the solution of relatively high-dimensional nonlinear systems whose operators are generally not sparse.
Thus, straightforward numerical discretizations of 1L-ROM can become impractical for these types of applications.

In this paper, we propose a novel {\it two-level ROM (2L-ROM)} that {\it significantly decreases the computational cost} of the standard 1L-ROM {\it and has the same convergence rate} as the 1L-ROM.
Next, we outline the 2L-ROM construction.

First, we emphasize that, in stark contrast to the standard 1L-ROM, which uses a single ROM subspace $X^R$, the new 2L-ROM utilizes 
{\it two nested ROM subspaces} $X^r \subset X^R$ with $r < R$.
The new 2L-ROM can be summarized as follows:
\begin{itemize}
    \item[(I)] In the first step, solve the nonlinear problem \eqref{eqn:Alg_Nonlin_1L} in the {\it low-dimensional} ROM subspace $X^r$ to obtain $u^r$.
    \item[(II)] In the second step, linearize the $R$-dimensional version of the Galerkin ROM \eqref{eqn:Alg_Nonlin_1L} with respect to $u^r$, and then solve the resulting {\it linear} problem in the {\it high-dimensional} ROM space $X^R$ to obtain $u^R$.
\end{itemize}
The 2L-ROM is formalized in Algorithm~\ref{alg:two_level_ROM}.

\begin{algorithm}[tbh]
\caption{Two-Level ROM (2L-ROM) Algorithm}
\label{alg:two_level_ROM}
Step 1. Solve the following nonlinear, low-dimensional problem, with $r$ POD basis
functions: We seek $u^r \in X^r$ satisfying for all $v^r \in X^r$ 
\begin{align}
    \label{eqn:Alg_Nonlin_2L}
    \begin{split}
        a(u^r,v^r) + b_{SKEW}(u^r;u^r,v^r) = (f,v^r). \\
    \end{split}
\end{align}
Step 2. Solve the following linear, higher-dimensional problem, with $R$ POD basis functions: Seek $u^R \in X^R$ satisfying for all $v^R \in X^R$
\begin{align}
    \label{eqn:Alg_lin_2L}
    \begin{split}
        a(u^R,v^R) + b_{SKEW}(u^R;u^r,v^R) +b_{SKEW}(u^r;u^R,v^R) \\
        = (f,v^R) + b_{SKEW}(u^r;u^r,v^R).
    \end{split}
\end{align}
\end{algorithm}

The {\it computational cost} of the 2L-ROM in Algorithm~\ref{alg:two_level_ROM} is significantly lower than the cost of the standard 1L-ROM in Algorithm~\ref{alg:one_level_ROM}.
Indeed, the 1L-ROM is generally expensive, since it solves a {\it high-dimensional} nonlinear problem, which can require many (e.g., $\mathcal{O}(10)-\mathcal{O}(10^2)$) iterations in a nonlinear solver (e.g., a Newton iteration). 
In contrast, the new 2L-ROM solves a {\it low-dimensional} nonlinear problem, which is much more efficient than solving the high-dimensional nonlinear problem.
Of course, the 2L-ROM also solves a high-dimensional linear system (in Step 2 of Algorithm~\ref{alg:two_level_ROM}), but its cost is much lower than the cost of solving the high-dimensional nonlinear problem in the 1L-ROM.
Thus, overall, the expectation is that the computational cost of the new 2L-ROM is significantly lower than the computational cost of the standard 1L-ROM.

\begin{remark}[Two-Space vs. Two-Grid]
We emphasize that all the current two-level ROMs are in fact two-grid methods.
To our knowledge, the 2L-ROM method in Algorithm~\ref{alg:two_level_ROM} is the first two-space ROM. 
\end{remark}

\section{Theoretical Results}
    \label{sec:theoretical-results}
 
We present in this section a basic error 
bound for the 2L-ROM solutions. The main result is summarized in Theorem~\ref{thm:main_them} below. As a preparation, we first establish a sufficient condition to ensure the existence and uniqueness of solutions for the 2L-ROM system.

\subsection{Well-Posedness Results}  \label{Sec_wellposdeness}

Let us recall the following classical result on the existence and uniqueness of solution to the full system \eqref{eqn:NSE_Weak_Formulation}. Related results for the 2L-ROM \eqref{eqn:Alg_Nonlin_2L}--\eqref{eqn:Alg_lin_2L} will be presented thereafter.

For this purpose, we define the following quantities for the trilinear term
\begin{equation}\label{Eq_def_constants}
\begin{aligned} 
    N & \coloneqq \sup_{u,v,w \in X\setminus \{\bm{0}\}} \frac{|b_{SKEW}(u;v,w)|}{|u|_1|v|_1|w|_1}, \\
    N_r & \coloneqq \sup_{u^r,v^r,w^r \in X^r\setminus \{\bm{0}\}} \frac{|b_{SKEW}(u^r;v^r,w^r)|}{|u^r|_1|v^r|_1|w^r|_1}, \\
    N_{R,r} & \coloneqq \sup_{\substack{u^R,w^R \in X^R\setminus \{\bm{0}\} \\ v^r \in X^r\setminus \{\bm{0}\}}} \frac{|b_{SKEW}(u^R;v^r,w^R)|}{|u^R|_1|v^r|_1|w^R|_1}, \\
    N_{r,R} & \coloneqq \sup_{\substack{u^r,v^r \in X^r\setminus \{\bm{0}\} \\ w^R \in X^R\setminus \{\bm{0}\}}} \frac{|b_{SKEW}(u^r;v^r,w^R)|}{|u^r|_1|v^r|_1|w^R|_1}, \textnormal{ and} \\
    |f|_* & \coloneqq \sup_{v\in X\setminus \{\bm{0}\}}\frac{|(f,v)|}{|v|_1},
\end{aligned}
\end{equation}
where the space $X$ is defined by \eqref{Eq_function_spaces}, $X^r$ and $X^R$ are subspaces of $X$ spanned by the leading POD modes with $r < R \le \ell$ and with $\ell$ given by \eqref{Eq_ROM_space}, and $\bm{0}$ denotes the equivalence class with norm zero in each of the underlying function spaces. 

\begin{proposition}  \label{prop:2.1}
   Let $d = 2$ or $3$ and $\Omega$ be a bounded domain in $\mathbb{R}^d$ with Lipschitz-continuous boundary $\partial \Omega$. 
   Then, for any $f$ in the dual space of $X$ and any Reynolds number $Re > 0$, the weak formulation~\eqref{eqn:NSE_Weak_Formulation} admits at least one solution $(u,p) \in (X, Q)$. Moreover, if the data are chosen such that for $N$ and $|f|_*$ given in \eqref{Eq_def_constants}, it satisfies 
    \begin{align}
        \label{eqn:prop}
        Re^2N|f|_* < 1,
    \end{align}
    then the solution to \eqref{eqn:NSE_Weak_Formulation} is also unique.
\end{proposition}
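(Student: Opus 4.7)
The plan is to follow the classical Leray-Hopf strategy: first reduce the saddle-point formulation \eqref{eqn:NSE_Weak_Formulation} to a problem posed on the divergence-free subspace $V := \{v \in X : c(q,v) = 0 \text{ for all } q \in Q\}$, establish existence in $V$ by a Galerkin-plus-compactness argument, recover the pressure $p$ by the inf-sup (LBB) condition, and then prove uniqueness by a direct energy estimate using the smallness assumption \eqref{eqn:prop}. All four pieces are standard for a Lipschitz domain (see, e.g., Girault--Raviart or Temam), so my goal is to lay out the skeleton and identify the one place where work is genuinely needed.

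First, for existence I would pick a Hilbert basis $\{w_k\}_{k\geq 1}$ of $V$ and look for Galerkin approximants $u_n = \sum_{k=1}^n \alpha_k w_k$ satisfying $a(u_n,w_k) + b_{SKEW}(u_n;u_n,w_k) = (f,w_k)$ for $k=1,\ldots,n$. Testing with $u_n$ itself and using the skew-symmetry identity $b_{SKEW}(u_n;u_n,u_n) = 0$ gives the a priori bound
\begin{equation*}
Re^{-1} |u_n|_1^2 = (f,u_n) \leq |f|_* \, |u_n|_1,
\end{equation*}
hence $|u_n|_1 \leq Re\,|f|_*$, uniformly in $n$. Existence of $u_n$ at each finite level follows from Brouwer's fixed point theorem applied to the continuous map defined by the Galerkin equations, exactly as in the classical argument. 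Passing to a weakly convergent subsequence $u_n \rightharpoonup u$ in $V$, the Rellich--Kondrachov compact embedding $V \hookrightarrow (L^2(\Omega))^d$ allows one to pass to the limit in the nonlinear term $b_{SKEW}(u_n;u_n,w_k)$, yielding a solution $u \in V$.

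Once $u \in V$ is in hand, the pressure is recovered by observing that $v \mapsto (f,v) - a(u,v) - b_{SKEW}(u;u,v)$ is a continuous linear functional on $X$ vanishing on $V$; the inf-sup condition for $c(\cdot,\cdot)$, valid on any Lipschitz domain, then produces a unique $p \in Q$ with $c(p,v) = (f,v) - a(u,v) - b_{SKEW}(u;u,v)$ for all $v \in X$. This completes the proof of the first assertion.

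For uniqueness, assume $(u_1,p_1)$ and $(u_2,p_2)$ are two solutions and set $w = u_1 - u_2 \in V$. Subtracting the two momentum equations, testing with $w$, and using $b_{SKEW}(u_i;u_i,w) - b_{SKEW}(u_2;u_2,w) = b_{SKEW}(w;u_1,w) + b_{SKEW}(u_2;w,w) = b_{SKEW}(w;u_1,w)$ (since $b_{SKEW}(\cdot;w,w)=0$), I obtain
\begin{equation*}
Re^{-1}|w|_1^2 = -b_{SKEW}(w;u_1,w) \leq N\,|u_1|_1\,|w|_1^2.
\end{equation*}
The a priori bound $|u_1|_1 \leq Re\,|f|_*$ derived in the existence step then gives $(Re^{-1} - N\,Re\,|f|_*)|w|_1^2 \leq 0$, i.e., $(1 - Re^2 N |f|_*)\,Re^{-1}|w|_1^2 \leq 0$, which under \eqref{eqn:prop} forces $w = 0$; uniqueness of $p$ then follows from the inf-sup condition applied to $p_1 - p_2$. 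The one step requiring real attention is the weak-limit passage in the nonlinear form, but for $d \leq 3$ this is routine via compact embeddings; everything else is algebraic manipulation driven by the skew-symmetry of $b_{SKEW}$ and the smallness hypothesis.
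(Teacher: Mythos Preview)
Your proposal is correct. The paper does not actually prove Proposition~\ref{prop:2.1}; it only cites references, mentioning both a Leray--Schauder fixed-point approach (Layton) and the Galerkin-plus-Brouwer approach (Girault--Raviart, Galdi). Your sketch follows the latter route essentially verbatim---Galerkin approximation on $V$, the a priori bound $|u_n|_1 \le Re\,|f|_*$ from skew-symmetry, compactness to pass to the limit in the nonlinear term, inf-sup to recover the pressure, and the standard energy subtraction for uniqueness---so there is nothing to correct and nothing materially different from what the cited sources do.
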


See, e.g.~\cite[Proposition 9 on page 107 and Theorem 17 on page 113]{layton2008introduction} for a proof of the above results,  which is based on the Leray-Schauder fixed point theorem. See also \cite[Chapter IV, Theorems 2.1 and 2.2]{GR86} or \cite[Chapter IX.3]{galdi2011introduction}, which uses instead a Galerkin method approach in combination with the fixed point theorem of Brouwer for each of the finite-dimensional approximation problem and then passes to the limit.  

Likewise, we have the following analogous results for the first equation, \eqref{eqn:Alg_Nonlin_2L}, in the 2L-ROM algorithm, which 
simply consists of the $r$-dimensional 1L-ROM, as pointed out before.  
\begin{lemma}
\label{lemma:2.4}
    Solutions to \eqref{eqn:Alg_Nonlin_2L} exist and satisfy 
    \begin{equation} \label{Eq_1LROM_apriori_est}
    |u^r|_1 \leq Re|f|_*.
    \end{equation} 
    Moreover, if $Re^2N_r |f|_* < 1$ for $N_r$ defined in \eqref{Eq_def_constants}, then the solution is also unique. 
\end{lemma}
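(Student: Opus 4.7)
The plan is to adapt the classical Galerkin/Brouwer argument underlying Proposition~\ref{prop:2.1} to the finite-dimensional subspace $X^r$, and then derive uniqueness by a standard energy argument on the difference of two solutions. There are three assertions to establish: the a priori estimate, existence, and conditional uniqueness.

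First I would obtain the bound \eqref{Eq_1LROM_apriori_est} by testing \eqref{eqn:Alg_Nonlin_2L} with $v^r = u^r$. The key identity is $b_{SKEW}(u^r;u^r,u^r) = 0$, which follows immediately from the antisymmetric definition \eqref{Def_b_skew_term}. This collapses the equation to $a(u^r,u^r) = (f,u^r)$, and the definitions of $a$ and of $|f|_*$ in \eqref{Eq_def_constants} give $Re^{-1}|u^r|_1^2 \le |f|_*\,|u^r|_1$, hence $|u^r|_1 \le Re\,|f|_*$.

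For existence, since $X^r$ is finite-dimensional, I would apply Brouwer's fixed point theorem in essentially the same way as in the references \cite{GR86,galdi2011introduction}. Concretely, for each $w \in X^r$ define $T(w) \in X^r$ to be the unique solution of the linearized problem $a(u,v^r) + b_{SKEW}(w;u,v^r) = (f,v^r)$ for all $v^r \in X^r$; this linear system is well-posed because the bilinear form $a(\cdot,\cdot) + b_{SKEW}(w;\cdot,\cdot)$ is coercive on $X^r$ (its skew-symmetric nonlinear part vanishes on the diagonal, leaving only $a$). The map $T$ is continuous, and the a priori calculation of the previous paragraph, applied to $T(w)$, shows that $T$ sends the closed ball of radius $Re\,|f|_*$ in $X^r$ into itself. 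Brouwer's theorem then produces a fixed point of $T$, which is a solution to \eqref{eqn:Alg_Nonlin_2L}.

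For uniqueness under the smallness assumption $Re^2 N_r |f|_* < 1$, let $u_1^r$ and $u_2^r$ be two solutions and set $e := u_1^r - u_2^r$. Using the trilinearity identity $b_{SKEW}(u_1^r;u_1^r,v^r) - b_{SKEW}(u_2^r;u_2^r,v^r) = b_{SKEW}(e;u_1^r,v^r) + b_{SKEW}(u_2^r;e,v^r)$ and testing with $v^r = e$, so that $b_{SKEW}(u_2^r;e,e) = 0$, the difference of the two equations reduces to $Re^{-1}|e|_1^2 = -b_{SKEW}(e;u_1^r,e)$. Bounding the right-hand side via $N_r$ from \eqref{Eq_def_constants} and then using $|u_1^r|_1 \le Re\,|f|_*$ yields $(1 - Re^2 N_r|f|_*)\,|e|_1^2 \le 0$, and the smallness hypothesis forces $e = 0$. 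The whole argument is a finite-dimensional analogue of Proposition~\ref{prop:2.1}, so I do not anticipate a real obstacle; the only subtlety is to keep track of the fact that all nonlinear interactions stay inside $X^r$, so the relevant trilinear constant is $N_r$ rather than $N$.
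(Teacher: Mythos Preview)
Your proposal is correct and follows essentially the same classical Galerkin/Brouwer route that the paper invokes by reference to \cite{GR86} and \cite{layton2008introduction}; the paper does not actually spell out a proof for this lemma, so you are simply supplying the standard details it points to. The only cosmetic difference is that the Galerkin step in \cite{GR86} typically uses the ``acute-angle'' corollary of Brouwer (a continuous map $P$ with $\langle P(\xi),\xi\rangle\ge 0$ on a sphere has a zero inside) rather than your Oseen-linearization fixed-point map $T$, but both are equivalent and equally standard.
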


Note that the existence result and the solution bound \eqref{Eq_1LROM_apriori_est} presented in Lemma~\ref{lemma:2.4} are simply those obtained at the Galerkin approximation stage of the proof for \cite[Chapter IV, Theorems 2.1]{GR86}. The uniqueness result under the assumption $Re^2N_r |f|_* < 1$ follows the same lines of argument as presented in \cite[Proposition 9 on page 107]{layton2008introduction} and \cite[Chapter IV, Theorems 2.2]{GR86}.

With the solution to \eqref{eqn:Alg_Nonlin_2L} available, we have the following result regarding the solution to the second equation, \eqref{eqn:Alg_lin_2L}, in the 2L-ROM algorithm. 

\begin{lemma}
    \label{lemma:2.5}
Let $f$ be an arbitrary element in the dual space of $X$. Let $N_{R,r}$ and $|f|_*$ be defined in \eqref{Eq_def_constants}. Assume that
    \begin{align} \label{eqn:uniqueness_cond}
        Re^2N_{R,r}|f|_* < 1.
    \end{align}
    Then, for any given solution $u^r$ to \eqref{eqn:Alg_Nonlin_2L}, the solution $u^R$ to \eqref{eqn:Alg_lin_2L} exists, is unique and satisfies
    \begin{align} \label{ean:uR_bound}
        |u^R|_1 \leq Re(1-Re^2N_{R,r}|f|_*)^{-1}(1+Re^2N_{r,R}|f|_*)|f|_*.
    \end{align}
\end{lemma}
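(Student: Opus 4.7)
The plan is to exploit the linearity of \eqref{eqn:Alg_lin_2L} with respect to the unknown $u^R$: the left-hand side is a bilinear form $\mathcal{A}(\cdot,\cdot)$ on $X^R \times X^R$ defined by
\begin{equation*}
\mathcal{A}(w,v) := a(w,v) + b_{SKEW}(w;u^r,v) + b_{SKEW}(u^r;w,v),
\end{equation*}
and the right-hand side is a linear functional $F(v) := (f,v) + b_{SKEW}(u^r;u^r,v)$. Since $X^R$ is finite-dimensional, existence and uniqueness of $u^R$ will follow as soon as $\mathcal{A}$ is coercive on $X^R$, and the quantitative bound \eqref{ean:uR_bound} will be obtained by choosing $v^R = u^R$ in \eqref{eqn:Alg_lin_2L}.

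The first step is coercivity. Using the skew-symmetry built into $b_{SKEW}$, one has the identity $b_{SKEW}(u^r;w,w) = 0$ for any $w \in X$, so
\begin{equation*}
\mathcal{A}(w,w) = Re^{-1}|w|_1^2 + b_{SKEW}(w;u^r,w).
\end{equation*}
Applying the definition of $N_{R,r}$ from \eqref{Eq_def_constants} to estimate $|b_{SKEW}(w;u^r,w)| \leq N_{R,r}|w|_1^2 |u^r|_1$ and then invoking the a priori bound $|u^r|_1 \leq Re|f|_*$ from Lemma~\ref{lemma:2.4} yields
\begin{equation*}
\mathcal{A}(w,w) \geq Re^{-1}\bigl(1 - Re^2 N_{R,r}|f|_*\bigr)|w|_1^2 \qquad \text{for all } w \in X^R,
\end{equation*}
which is strictly positive by the assumption \eqref{eqn:uniqueness_cond}. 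Coercivity in a finite-dimensional space implies that $\mathcal{A}$ is non-degenerate, so \eqref{eqn:Alg_lin_2L} admits a unique solution $u^R \in X^R$.

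The second step is the a priori estimate. Testing \eqref{eqn:Alg_lin_2L} with $v^R = u^R$ and using the coercivity bound gives
\begin{equation*}
Re^{-1}\bigl(1 - Re^2 N_{R,r}|f|_*\bigr)|u^R|_1^2 \leq F(u^R).
\end{equation*}
For the right-hand side, the definition of $|f|_*$ controls $|(f,u^R)| \leq |f|_*\,|u^R|_1$, while the definition of $N_{r,R}$ together with Lemma~\ref{lemma:2.4} gives
\begin{equation*}
|b_{SKEW}(u^r;u^r,u^R)| \leq N_{r,R}|u^r|_1^2|u^R|_1 \leq N_{r,R}Re^2|f|_*^2\,|u^R|_1.
\end{equation*}
Combining these and dividing by $|u^R|_1$ produces exactly \eqref{ean:uR_bound}.

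I do not expect any genuine obstacle: the problem is linear, the skew-symmetry of $b_{SKEW}$ cancels the term $b_{SKEW}(u^r;u^R,u^R)$ almost for free, and the hypothesis \eqref{eqn:uniqueness_cond} is engineered to keep the coercivity constant positive. The only thing to be careful about is that the a priori bound on $u^r$ from Lemma~\ref{lemma:2.4} holds regardless of whether $u^r$ is unique, so that $N_{R,r}$ and $N_{r,R}$ — which bound trilinear terms involving one or two arguments in $X^r$ but tested against $X^R$ — are the correct constants rather than $N_r$ or $N$.
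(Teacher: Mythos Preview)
Your proposal is correct and follows essentially the same approach as the paper: set $v^R = u^R$, use skew-symmetry to kill $b_{SKEW}(u^r;u^R,u^R)$, bound the remaining trilinear terms via $N_{R,r}$, $N_{r,R}$ and Lemma~\ref{lemma:2.4}, and read off both well-posedness and the a priori bound. The only cosmetic difference is that you phrase existence and uniqueness via coercivity of the bilinear form $\mathcal{A}$, whereas the paper first derives the a priori inequality and then observes it forces a trivial kernel for the finite-dimensional linear map; the underlying computation is identical.
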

\begin{proof}
Note that \eqref{eqn:Alg_lin_2L} is a finite-dimensional linear algebraic system for $u^R$. To show the existence and uniqueness of a solution, it suffices to establish the \textit{a priori} bound \eqref{ean:uR_bound}. For this purpose, let us set $v^R = u^R$ in \eqref{eqn:Alg_lin_2L}  to get 
    \begin{equation}
    \begin{aligned}
a(u^R,u^R) + b_{SKEW}(u^R;u^r,u^R) & + b_{SKEW}(u^r;u^R,u^R) \\
& = (f,u^R) + b_{SKEW}(u^r;u^r,u^R).
    \end{aligned}
        \end{equation}
Since $b_{SKEW}(u^r;u^R,u^R) = 0$ due to skew symmetry and $a(u^R,u^R) = Re^{-1}|u^R|_1^2$, we get 
\begin{equation}
Re^{-1}|u^R|_1^2 - N_{R,r}|u^R|_1^2|u^r|_1 \leq (f,u^R) + N_{r,R}|u^r|_1^2|u^R|_1,  
        \end{equation}
where we have used the definition of $N_{R,r}$ and $N_{r,R}$ given in \eqref{Eq_def_constants} to estimate the terms $b_{SKEW}(u^R;u^r,u^R)$ and $b_{SKEW}(u^r;u^r,u^R)$.  Since $(f,u^R) \le |f|_*|u^R|_1$, we get 
\begin{equation}
Re^{-1}|u^R|_1 - N_{R,r}|u^R|_1|u^r|_1 \leq |f|_* + N_{r,R}|u^r|_1^2.  
\end{equation}

Now, by using the estimate \eqref{Eq_1LROM_apriori_est} for $|u^r|_1$ in the above inequality, we arrive at
\begin{equation}
        (Re^{-1} - ReN_{R,r}|f|_*)|u^R|_1 \leq |f|_* + Re^2N_{r,R}|f|_*^2.
\end{equation}
The above estimate shows that if $(1 - Re^2 N_{R,r}|f|_*) > 0$, then $|u^R|_1$ would satisfy the estimate \eqref{ean:uR_bound} provided that the solution $u^R$ exists. However, thanks to \eqref{ean:uR_bound}, we see that $|u^R|_1$ must be zero if $f$ is identically zero; namely, the finite-dimensional linear operator associated with the left-hand side (LHS) of \eqref{eqn:Alg_lin_2L}, which maps $u^R$ to $a(u^R,v^R) + b_{SKEW}(u^R;u^r,v^R) +b_{SKEW}(u^r;u^R,v^R)$ for each fixed $v^R$ in $X^R$, is invertible. Hence, \eqref{eqn:Alg_lin_2L} admits a unique solution for each $f$ and $u^r$. 
\end{proof}

\begin{remark} \label{Rmk:uniquess_cond}
Note that from \eqref{Eq_def_constants}, we see that $N_r \le N$ and $N_{R,r} \le N$ since the supremum in the definitions of $N_r$ and $N_{R,r}$ are taken in a smaller space than that for $N$. As a result, the condition $ Re^2N|f|_* < 1$ given by \eqref{eqn:prop} automatically implies that $Re^2N_r |f|_* < 1$ and $Re^2N_{R,r} |f|_* < 1$. Thus, condition \eqref{eqn:prop} ensures that both the steady NSE \eqref{eqn:NSE_Weak_Formulation} and the 2L-ROM system \eqref{eqn:Alg_Nonlin_2L}--\eqref{eqn:Alg_lin_2L} have a unique solution.

\end{remark}

\subsection{Error estimates}
With the above well-posedness results available, we 
derive 
a bound for the error between the solution $u$ to the steady NSE system \eqref{eqn:NSE_Weak_Formulation} and the solution $u^R$ to the 2L-ROM system obtained through Algorithm~\ref{alg:two_level_ROM}. We will first rewrite the weak formulation \eqref{eqn:NSE_Weak_Formulation} in a form that matches better the form of the linear system \eqref{eqn:Alg_lin_2L} in the 2L-ROM. The equation satisfied by the error $u-u^R$ is then derived, and the corresponding error 
bound is presented in Theorem~\ref{thm:main_them}.

\paragraph{Rewriting the weak formulation \eqref{eqn:NSE_Weak_Formulation}.}

Let us first recall the following lemma from \cite[Lemma 2.2]{layton1993two}, which concerns two basic identities about the trilinear term in the weak formulation \eqref{eqn:NSE_Weak_Formulation} of the NSE. Note that $u^r$ in the lemma below is an arbitrary element in $(H^1(\Omega))^d$. In the estimates 
proved later in this section, we will set $u^r$ to be the solution of the first equation in the 2L-ROM; cf.~\eqref{eqn:Alg_Nonlin_2L}.

 \begin{lemma}  \label{lemma:2.2}
    Let $b(u;v,w)$ and $b_{SKEW}(u;v,w)$ be given by \eqref{Def_b_term} and \eqref{Def_b_skew_term}, respectively. Then, for any $u,v,u^r \in (H^1(\Omega))^d$, it holds that
    \begin{equation}
        b(u;u,v) = b(u;u^r,v) + b(u^r;u,v) - b(u^r;u^r,v) + b(u-u^r;u-u^r,v), 
        \end{equation}
        and that
        \begin{equation} \label{Eq_b_skew_identity}
        \begin{aligned}
        b_{SKEW}(u;u,v) &= b_{SKEW}(u;u^r,v) + b_{SKEW}(u^r;u,v) \\
        				 & \qquad -b_{SKEW}(u^r;u^r,v) + b_{SKEW}(u-u^r;u-u^r,v).
	 \end{aligned}
\end{equation}
 \end{lemma}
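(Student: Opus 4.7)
The plan is to prove both identities purely algebraically from the multilinearity of the trilinear form $b$, then deduce the $b_{SKEW}$ version as a free consequence. The key observation is that $b(u;v,w) = \int_\Omega (u \cdot \nabla) v \cdot w \, dx$ is linear in each of its three slots. In particular, it is linear in the first slot (since $u \mapsto u \cdot \nabla$ is linear), linear in the second slot (since $v \mapsto \nabla v$ is linear), and trivially linear in the third slot.

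First, I would expand the ``remainder'' term $b(u-u^r;u-u^r,v)$ by using linearity in the first argument and then in the second argument:
\begin{equation*}
b(u-u^r;u-u^r,v) = b(u;u-u^r,v) - b(u^r;u-u^r,v) = b(u;u,v) - b(u;u^r,v) - b(u^r;u,v) + b(u^r;u^r,v).
\end{equation*}
Rearranging this equality immediately yields the first identity. The second identity, for $b_{SKEW}$, then follows without additional work because $b_{SKEW}(u;v,w) = \tfrac{1}{2}[b(u;v,w) - b(u;w,v)]$ is a linear combination of $b$'s and therefore inherits trilinearity; applying the same expansion to each of the two pieces that define $b_{SKEW}$, or more cleanly substituting the first identity (once with the third slot as $v$ and once with the second and third slots swapped) and combining with a factor of $\tfrac{1}{2}$, gives \eqref{Eq_b_skew_identity}.

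There is no real obstacle in this proof since everything is a consequence of multilinearity; the only thing worth checking is that each of the four $b$-terms produced by the expansion is well-defined under the sole hypothesis $u,v,u^r \in (H^1(\Omega))^d$. This is standard: by the Sobolev embedding $H^1(\Omega) \hookrightarrow L^4(\Omega)$ in dimensions $d=2,3$, each trilinear term $\int_\Omega (w_1 \cdot \nabla) w_2 \cdot w_3 \, dx$ is finite for $w_1, w_2, w_3 \in (H^1(\Omega))^d$ by Hölder's inequality with exponents $(4,2,4)$, so every quantity appearing on both sides of the identities is well-defined and the algebraic rearrangement is legitimate.
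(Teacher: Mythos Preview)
Your proof is correct and follows essentially the same approach as the paper: the paper expands $[(u-u^r)\cdot\nabla](u-u^r)$ pointwise to obtain the identity \eqref{Eq_nonlin_identity} and then substitutes into the definitions of $b$ and $b_{SKEW}$, whereas you carry out the identical expansion directly at the level of the trilinear form $b$ via its multilinearity. The only addition you make is the well-definedness check via the $H^1\hookrightarrow L^4$ embedding, which the paper omits as standard.
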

The two identities in Lemma~\ref{lemma:2.2} are obtained by using the following identity in the definitions of $b$ and $b_{SKEW}$
 \begin{equation} \label{Eq_nonlin_identity}
        (u\cdot \nabla)u = -(u^r\cdot \nabla)u^r + (u \cdot \nabla)u^r + (u^r \cdot \nabla)u + [(u-u^r)\cdot \nabla](u-u^r).
\end{equation}
Note also that \eqref{Eq_nonlin_identity} itself results from a direct expansion of $[(u-u^r)\cdot \nabla](u-u^r)$.

Using \eqref{Eq_b_skew_identity} in Lemma \ref{lemma:2.2}, we can rewrite \eqref{eqn:NSE_Weak_Formulation_part1} in the weak formulation of the NSE as: For any $u^r \in (H^1(\Omega))^d$, 
seek $u \in X$ that satisfies for all $v$ in $X$ that
\begin{align}
    \label{eqn:NSE_Weak_Formulation_Equivalent}
    \begin{split}
        a(u,v) & + b_{SKEW}(u;u^r,v) + b_{SKEW}(u^r;u,v) \\
        & + b_{SKEW}(u-u^r;u-u^r,v) + c(p,v) = (f,v) + b_{SKEW}(u^r;u^r,v).
    \end{split}
\end{align}

\paragraph{Derivation of the error equation.} Instead of working directly on the error $u-u^R$, it will be beneficial to consider instead 
\begin{equation} \label{Eq_new_err_terms}
\phi^R := u^R - w^R,  \quad \eta^R := u - w^R, 
\end{equation}
where $w^R$ is an arbitrary element in $X^R$. The main effort is to derive a suitable bound for $|\phi|^R_1$. Once this is done, an error bound for $|u-u^R|_1$ can be obtained by using the triangle inequality.

To do so, we set $v$ in \eqref{eqn:NSE_Weak_Formulation_Equivalent} to be $v^R$ and by subtracting the resulting equation from \eqref{eqn:Alg_lin_2L}, we get
\begin{equation} \label{Err_eqn1}
\begin{aligned}
a(u^R,v^R) - a(u,v^R) & + b_{SKEW}(u^R; u^r,v^R)  - b_{SKEW}(u;u^r,v^R)  \\
& + b_{SKEW}(u^r;u^R,v^R)    - b_{SKEW}(u^r;u,v^R) \\
 &   - b_{SKEW}(u-u^r;u-u^r,v^R) - c(p,v^R) = 0.
\end{aligned}
\end{equation}
Now we rewrite the above equation in terms of $\phi^R$ and $\eta^R$ defined by \eqref{Eq_new_err_terms}. For instance, the 
terms $a(u^R,v^R) - a(u,v^R)$ can be rewritten as
\begin{equation}
\begin{aligned}
a(u^R,v^R) - a(u,v^R) & =  \big(a(u^R,v^R)  - a(w^R,v^R) \big)  - \big( a(u,v^R) - a(w^R,v^R) \big) \\
& = a(\phi^R,v^R) - a(\eta^R,v^R).
\end{aligned}
\end{equation}
Similarly, we have 
\begin{equation}
\begin{aligned}
& b_{SKEW}(u^R; u^r,v^R)  - b_{SKEW}(u;u^r,v^R) \\
& \qquad =  b_{SKEW}(\phi^R; u^r,v^R)  - b_{SKEW}(\eta^R;u^r,v^R),
\end{aligned}
\end{equation}
and 
\begin{equation}
\begin{aligned}
& b_{SKEW}(u^r;u^R,v^R)    - b_{SKEW}(u^r;u,v^R) \\
& \qquad = b_{SKEW}(u^r; \phi^R, v^R)  - b_{SKEW}(u^r;\eta^R,v^R).
\end{aligned}
\end{equation}
Using the above identities in \eqref{Err_eqn1}, we get 
\begin{equation} \label{Err_eqn2}
\begin{aligned}
 a(\phi^R,v^R) - a(\eta^R,v^R) & +  b_{SKEW}(\phi^R;u^r,v^R) - b_{SKEW}(\eta^R;u^r,v^R) \\
 & + b_{SKEW}(u^r;\phi^R,v^R)  - b_{SKEW}(u^r;\eta^R,v^R)  \\
  & - b_{SKEW}(u-u^r;u-u^r,v^R) - c(p,v^R) = 0.
\end{aligned}
\end{equation}

Note that since we assumed that the POD modes are constructed from FEM solution snapshots that are discretely divergence-free (cf.~Section~\ref{Sec_ROM_basis}), then for any $\chi^h$ in the discretized pressure space $Q^h$ 
involved in the construction of these FEM solutions (cf.~\eqref{Eq_FEM_soln}), 
we have 
\begin{equation}
c(\chi^h,v^R) = 0, \quad \text{for all } v^R \in X^R.
\end{equation}
In turn, we have 
\begin{equation} \label{Eq_pressure_rewritten}
c(p,v^R) = c(p,v^R) - c(\chi^h,v^R) = c(p-\chi^h,v^R), \; \text{for all } v^R \in X^R,  \chi^h \in Q^h.
\end{equation}

Using \eqref{Eq_pressure_rewritten} in \eqref{Err_eqn2}, we get the following error equation after rearranging the terms
\begin{equation} \label{eqn:Error_Eqn}
\begin{aligned}
        a(\phi^R,v^R) &+ b_{SKEW}(\phi^R;u^r,v^R) + b_{SKEW}(u^r,\phi^R,v^R) \\
         & = a(\eta^R,v^R)  + b_{SKEW}(\eta^R;u^r,v^R) + b_{SKEW}(u^r;\eta^R,v^R) \\
        & \hspace{5.5em} + b_{SKEW}(u-u^r;u-u^r,v^R) + c(p-\chi^h,v^R),
\end{aligned}
\end{equation}
which holds for all $v^R \in X^R$ and $\chi^h \in Q^h$, where $\phi^R$ and $\eta^R$ are defined in \eqref{Eq_new_err_terms}.

\paragraph{Estimation 
of the error $|u - u^R|_1$.}
We are now in a position to 
prove a bound for $|u - u^R|_1$ based on the error equation \eqref{eqn:Error_Eqn}. We have the following theorem, which can be viewed as the 2L-ROM analogue of the finite element results given in \cite[Theorem 2.1]{layton1993two}. We will place ourselves in the setting that both the steady NSE \eqref{eqn:NSE_Weak_Formulation} and the 2L-ROM system \eqref{eqn:Alg_Nonlin_2L}--\eqref{eqn:Alg_lin_2L} have a unique solution, denoted by $u$ and $(u^r, u^R)$, respectively. Note that based on the well-posedness results presented in Section~\ref{Sec_wellposdeness}, it suffices to assume the condition given by \eqref{eqn:prop} holds; that is, $Re^2N|f|_* < 1$ 
(see Remark~\ref{Rmk:uniquess_cond}). Let us also define 
\begin{equation} \label{Eq_alpha}
\alpha \coloneqq 1 - Re^2N_{R,r}|f|_*.
\end{equation}
Recall that $N_{R,r} \le N$ as pointed out in Remark~\ref{Rmk:uniquess_cond}. We have thus $\alpha > 0$ under the condition \eqref{eqn:prop}.

\begin{theorem}[Two-Level ROM Error Bound]
\label{thm:main_them}

Let $\Omega$ and $f$ be as given in Proposition~{\em \ref{prop:2.1}}. Assume furthermore that $\partial\Omega$ is $C^2$ smooth and that \eqref{eqn:prop} holds. Let $\alpha$ be defined by \eqref{Eq_alpha}. Then the error $u - u^R$ satisfies
\begin{align} \label{Eq_err_est_goal}
    \begin{split}
    |u - u^R|_1 & \leq \inf_{v^R \in X^R} \Big\{(\alpha^{-1} + 1)|u-v^R|_1 \\
    & \qquad + \alpha^{-1}Re C(\Omega)\|u-v^R\|_0^{1/2} |u-v^R|_1^{1/2}|u^r|_1  \\
    & \qquad + \alpha^{-1}Re C(\Omega) \|u^r\|^{1/2}_0 |u^r|^{1/2}_1 |u-v^R|_1 \Big\} \\
    &\qquad  + \alpha^{-1}Re \inf_{\chi^h \in Q^h} \|p-\chi^h\|_0 \\
    & \qquad + \alpha^{-1}Re C(\Omega)\|u-u^r\|_0^{1/2}|u-u^r|_1^{3/2},
        \end{split}
\end{align}
where $C(\Omega) > 0$ is a generic constant depending only on the spatial domain $\Omega$.
\end{theorem}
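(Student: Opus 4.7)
The plan is to test the error equation \eqref{eqn:Error_Eqn} with the specific choice $v^R = \phi^R$, extract a coercive lower bound on the left-hand side, bound each right-hand side contribution by a data term times $|\phi^R|_1$, divide through, and close via the triangle inequality $|u-u^R|_1 \leq |\eta^R|_1 + |\phi^R|_1$ followed by taking the infima over $w^R \in X^R$ and $\chi^h \in Q^h$.

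For the left-hand side I would use skew symmetry to kill $b_{SKEW}(u^r;\phi^R,\phi^R) = 0$, coercivity to get $a(\phi^R,\phi^R) = Re^{-1}|\phi^R|_1^2$, and the definition \eqref{Eq_def_constants} together with the a priori estimate $|u^r|_1 \leq Re|f|_*$ from Lemma~\ref{lemma:2.4} to bound $|b_{SKEW}(\phi^R;u^r,\phi^R)| \leq N_{R,r}Re|f|_*\,|\phi^R|_1^2$. Combining these would yield
\begin{equation*}
\text{LHS} \;\geq\; Re^{-1}\bigl(1 - Re^2 N_{R,r}|f|_*\bigr)|\phi^R|_1^2 \;=\; Re^{-1}\alpha\,|\phi^R|_1^2,
\end{equation*}
with $\alpha > 0$ guaranteed by \eqref{eqn:prop} (cf.\ Remark~\ref{Rmk:uniquess_cond}). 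On the right-hand side, the linear contributions $a(\eta^R,\phi^R)$ and $c(p-\chi^h,\phi^R)$ are bounded by $Re^{-1}|\eta^R|_1|\phi^R|_1$ and $C(\Omega)\|p-\chi^h\|_0|\phi^R|_1$, respectively. For the three trilinear contributions $b_{SKEW}(\eta^R;u^r,\phi^R)$, $b_{SKEW}(u^r;\eta^R,\phi^R)$, and $b_{SKEW}(u-u^r;u-u^r,\phi^R)$ I would invoke the sharpened Sobolev--Ladyzhenskaya-type inequality
\begin{equation*}
|b_{SKEW}(u;v,w)| \;\leq\; C(\Omega)\,\|u\|_0^{1/2}|u|_1^{1/2}\,|v|_1\,|w|_1,
\end{equation*}
always placing $\phi^R$ in the third slot so that it contributes a clean $|\phi^R|_1$ factor; this is precisely what produces the mixed $\|\cdot\|_0^{1/2}|\cdot|_1^{1/2}$ structure visible in \eqref{Eq_err_est_goal}.

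Dividing the resulting inequality by $Re^{-1}\alpha|\phi^R|_1$ puts every nonlinear and pressure contribution in front of $\alpha^{-1}Re$, while the $a$-term contribution reduces to $\alpha^{-1}|\eta^R|_1$; the triangle inequality then contributes the extra ``$+1$'' in the coefficient $(1+\alpha^{-1})$ of $|u-v^R|_1$, and passing to the infima over $X^R$ and $Q^h$ closes the argument. The main obstacle I anticipate is justifying the sharpened trilinear estimate, which relies on Gagliardo--Nirenberg/Ladyzhenskaya interpolation (essentially $\|u\|_{L^3} \leq C\|u\|_0^{1/2}|u|_1^{1/2}$ in 3D combined with $H^1 \hookrightarrow L^6$, or the 2D analogue) and is precisely the reason the $C^2$-boundary hypothesis is imposed; everything else is bookkeeping that parallels the FEM analogue in~\cite{layton1993two}.
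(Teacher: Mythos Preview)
Your proposal is correct and follows essentially the same route as the paper: test \eqref{eqn:Error_Eqn} with $v^R=\phi^R$, use skew symmetry and the $N_{R,r}$ bound together with $|u^r|_1\le Re|f|_*$ to obtain the coercive lower bound $Re^{-1}\alpha|\phi^R|_1^2$, estimate the three $b_{SKEW}$ terms on the right via the Ladyzhenskaya-type inequality $|b_{SKEW}(u;v,w)|\le C(\Omega)\|u\|_0^{1/2}|u|_1^{1/2}|v|_1|w|_1$, divide through, apply the triangle inequality, and take infima. The only cosmetic difference is that the paper bounds $c(p-\chi^h,\phi^R)$ directly by $\|p-\chi^h\|_0|\phi^R|_1$ (no $C(\Omega)$ needed there), and it inserts the a priori bound on $|u^r|_1$ one line later than you do.
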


\begin{proof}
    We start with the error equation, (\ref{eqn:Error_Eqn}), and set $v^R = \phi^R$ to get
    \begin{align}
    \label{eqn:Error_Eqn_phis}
    \begin{split}
        a(\phi^R,\phi^R) & + b_{SKEW}(\phi^R;u^r,\phi^R) + b_{SKEW}(u^r;\phi^R,\phi^R) \\
       & = a(\eta^R,\phi^R) + b_{SKEW}(\eta^R;u^r,\phi^R) + b_{SKEW}(u^r;\eta^R,\phi^R) \\
        & \hspace{5.5em} + b_{SKEW}(u-u^r;u-u^r,\phi^R) + c(p - \chi^h,\phi^R).
    \end{split}
    \end{align}
For the terms on the LHS of \eqref{eqn:Error_Eqn_phis}, we have $a(\phi^R,\phi^R) = Re^{-1}|\phi|_1^2$ by definition, $b_{SKEW}(u^r;\phi^R,\phi^R) = 0$ by skew symmetry, and 
\begin{equation}
|b_{SKEW}(\phi^R;u^r,\phi^R)| \leq N_{R,r} |\phi^R|_1^2|u_r|_1,
\end{equation}
where $N_{R,r}$ is defined in \eqref{Eq_def_constants}. As a result, the LHS of \eqref{eqn:Error_Eqn_phis} can be bounded from below as follows:  
\begin{equation} \label{Eq_LHS_est}
\begin{aligned}
a(\phi^R,\phi^R) + b_{SKEW}(\phi^R;u^r,\phi^R) & + b_{SKEW}(u^r;\phi^R,\phi^R)  \\
&\quad \ge Re^{-1}|\phi|_1^2 - N_{R,r}|\phi^R|_1^2|u_r|_1.
\end{aligned}
\end{equation}

Next, we derive an upper bound for the terms on the right-hand side (RHS) of \eqref{eqn:Error_Eqn_phis}. First note that
\begin{equation} \label{Eq_easy_terms_est}
\begin{aligned}
    a(\eta^R,\phi^R) & = Re^{-1}(\nabla\eta^R,\nabla\phi^R) \leq Re^{-1}|\eta^R|_1|\phi^R|_1, \\
    c(p-\chi^h,\phi^R) & = - \int_\Omega(\nabla\cdot\phi^R)(p-\chi^h)\,dx \leq |\phi^R|_1\|p-\chi^h\|_0.
\end{aligned}
\end{equation}

In order to bound the remaining three $b_{SKEW}$ terms, we will make use of the following 
inequality for the trilinear term $b$ defined in \eqref{Def_b_term}: 
\begin{equation} \label{Eq_b_est}
|b(u;v,w)| \le C(\Omega) \sqrt{\|u\|_0 |u|_1}\, |v|_1 |w|_1, \quad \text{ for all }  u, v, w \in X,
\end{equation}
which holds provided that the domain $\Omega \subset \mathbb{R}^d$ ($d = 2, 3$) is bounded and has $C^2$ boundary, where $C(\Omega)$ is a generic constant depending only on $\Omega$. See e.g.~\cite[Lemma 22]{layton2008introduction} and \cite[Lemma 61.1]{sell2013dynamics}.  

Note that the same estimate given in \eqref{Eq_b_est} holds for $b_{SKEW}$ as well by simply inspecting the definition of $b_{SKEW}$ given by \eqref{Def_b_skew_term}. Namely, the following estimate holds under our assumption: 
\begin{equation} \label{Eq_bskew_est_generic}
|b_{SKEW}(u;v,w)| \le C(\Omega) \sqrt{\|u\|_0 |u|_1}\, |v|_1 |w|_1, \quad \text{ for all }  u, v, w \in X.
\end{equation}

Applying \eqref{Eq_bskew_est_generic} to the three $b_{SKEW}$ terms on the RHS of \eqref{eqn:Error_Eqn_phis}, we get
\begin{equation} \label{Eq_est_b_skews}
\begin{aligned}
 |b_{SKEW}(\eta^R;u^r,\phi^R)|  & \le C(\Omega) \|\eta^R\|^{1/2}_0 |\eta^R|^{1/2}_1 |u^r|_1 |\phi^R|_1, \\
 |b_{SKEW}(u^r;\eta^R,\phi^R)| & \le C(\Omega) \|u^r\|^{1/2}_0 |u^r|^{1/2}_1 |\eta^R|_1 |\phi^R|_1, \\
 |b_{SKEW}(u-u^r;u-u^r,\phi^R)| & \le C(\Omega) \|u-u^r\|^{1/2}_0 |u-u^r|^{3/2}_1 |\phi^R|_1.
\end{aligned}
\end{equation}

By using the estimates \eqref{Eq_easy_terms_est} and \eqref{Eq_est_b_skews} for the terms on the RHS of \eqref{eqn:Error_Eqn_phis} as well as the lower bound derived in \eqref{Eq_LHS_est} for the LHS of \eqref{eqn:Error_Eqn_phis}, we get, after canceling one factor of $|\phi^R|_1$ on both sides, that
\begin{equation} \label{eqn:sim_to_2.7b}
\begin{aligned}
(Re^{-1} - & N_{R,r}|u^r|_1)|\phi^R|_1 \leq Re^{-1}|\eta^R|_1 + \|p-\chi^h\|_0 \\
& \quad + C(\Omega) \|\eta^R\|^{1/2}_0 |\eta^R|^{1/2}_1 |u^r|_1 + C(\Omega) \|u^r\|^{1/2}_0 |u^r|^{1/2}_1 |\eta^R|_1\\
& \quad + C(\Omega) \|u-u^r\|^{1/2}_0 |u-u^r|^{3/2}_1.
\end{aligned}
\end{equation}

Recall from \eqref{Eq_new_err_terms} that $\eta^R = u-w^R$, and $\phi^R = u^R - w^R$. By substituting these into \eqref{eqn:sim_to_2.7b},  we have
\begin{align}
    \label{eqn:2.8}
    \begin{split}
        (Re^{-1}-N_{R,r}|u^r|_1) |u^R-w^R|_1 & \leq Re^{-1}|u-w^R|_1 + \|p-\chi^h\|_0 \\
        & \quad + C(\Omega)\|u-w^R\|_0^{1/2} |u-w^R|_1^{1/2}|u^r|_1 \\
        & \quad + C(\Omega) \|u^r\|^{1/2}_0 |u^r|^{1/2}_1 |u-w^R|_1 \\
        & \quad + C(\Omega)\|u-u^r\|_0^{1/2}|u-u^r|_1^{3/2}.
    \end{split}
\end{align}
Since the quantity $\alpha = 1 - Re^2N_{R,r}|f|_*$ defined by \eqref{Eq_alpha} is positive thanks to the condition \eqref{eqn:prop},  
after multiplying \eqref{eqn:2.8} by $Re$ and dividing both sides by $\alpha$, we obtain
\begin{align}
    \label{eqn:2.10}
    \begin{split}
        |u^R-w^R|_1 & \leq \alpha^{-1}|u-w^R|_1 + \alpha^{-1}Re\|p-\chi^h\|_0 \\
        & \quad + \alpha^{-1}Re C(\Omega)\|u-w^R\|_0^{1/2} |u-w^R|_1^{1/2}|u^r|_1 \\
        & \quad + \alpha^{-1}Re C(\Omega) \|u^r\|^{1/2}_0 |u^r|^{1/2}_1 |u-w^R|_1 \\
        & \quad + \alpha^{-1}Re C(\Omega)\|u-u^r\|_0^{1/2}|u-u^r|_1^{3/2}.
    \end{split}
\end{align}
Now by adding $|u-w^R|_1$ to both sides of \eqref{eqn:2.10} and applying the triangle inequality to the resulting LHS, we get
\begin{align*}
    |u - u^R|_1 & \leq (\alpha^{-1} + 1)|u-w^R|_1 + \alpha^{-1}Re\|p-\chi^h\|_0 \\
    & \quad + \alpha^{-1}Re C(\Omega)\|u-w^R\|_0^{1/2} |u-w^R|_1^{1/2}|u^r|_1 \\
        & \quad + \alpha^{-1}Re C(\Omega) \|u^r\|^{1/2}_0 |u^r|^{1/2}_1 |u-w^R|_1 \\
        & \quad + \alpha^{-1}Re C(\Omega)\|u-u^r\|_0^{1/2}|u-u^r|_1^{3/2}.
\end{align*}
In the above inequality, since we are free to choose $w^R \in X^R$, we can take the infimum over $X^R$ for all the terms on the RHS that involve $w^R$. Likewise, we can take the infimum over $\chi^h \in Q^h$ for the $\|p-\chi^h\|_0$ term. We have thus 
\begin{align*}
    \begin{split}
        |u - u^R|_1 & \leq \inf_{w^R \in X^R} \big\{(\alpha^{-1} + 1)|u-w^R|_1  \\
 	& \quad + \alpha^{-1}Re C(\Omega)\|u-w^R\|_0^{1/2} |u-w^R|_1^{1/2}|u^r|_1 \\
        & \quad + \alpha^{-1}Re C(\Omega) \|u^r\|^{1/2}_0 |u^r|^{1/2}_1 |u-w^R|_1 \big\} \\
        & \quad +  \alpha^{-1}Re \inf_{\chi^h \in Q^h} \|p-\chi^h\|_0  \\        
        &\quad  + \alpha^{-1}Re C(\Omega)\|u-u^r\|_0^{1/2}|u-u^r|_1^{3/2}.
    \end{split}
\end{align*}
This is exactly the desired result given in \eqref{Eq_err_est_goal}.

\end{proof}

\section{Numerical Results}
    \label{sec:numerical-results}

In this section, we compare the new 2L-ROM to the standard 1L-ROM in the numerical simulation of the steady Burgers equation.
In Section~\ref{sec:numerical-results-mathematical-model}, we present the mathematical model used in our numerical investigation.
In Section~\ref{sec:numerical-results-rom}, we present the development of the 2L-ROM and 1L-ROM, including the ROM basis and operator construction, the Newton solver initialization, and the criteria used in our numerical investigation.
Finally, in Section~\ref{sec:numerical-results-results}, we present and discuss results for two numerical experiments.

\subsection{Mathematical Model}
    \label{sec:numerical-results-mathematical-model}

For simplicity, instead of the steady NSE used to construct and analyze the 2L-ROM in Sections~\ref{sec:preliminaries}--\ref{sec:theoretical-results}, in our numerical investigation we consider 
the steady viscous Burgers equation:
\begin{align}
    \label{Steady_Burgers}
    \begin{split}
        - \nu u_{xx} + uu_x  & = f(x), \quad x \in (a,b) \\
        u(a) & = \alpha, \quad u(b) = \beta,
    \end{split}
\end{align}
where $a, b, \alpha$, and $\beta$ are parameters, $\Omega = [a,b]$ is the computational domain, $\nu$ is the diffusion coefficient, and $f$ is the forcing term.
In our numerical investigation, we consider the method of manufactured solutions with the following exact solution:
\begin{align}
\label{eqn:manu_soln}
    u(x) = 1 - \frac{2(x-a)}{b-a} + \frac{\exp{\big(-\frac{(x-q)^2}{2\sigma^2}\big)}\sin{\big(\frac{k\pi(x-a)}{(b-a)}}\big)}{\sqrt{2\pi}\sigma 
    },
\end{align}
where $q, \sigma$, and $k$ are parameters.
The corresponding forcing term is obtained by plugging in the exact solution~\eqref{eqn:manu_soln} into the Burgers equation~\eqref{Steady_Burgers}.

The exact solution~\eqref{eqn:manu_soln} consists of a linear component and the product between a Gaussian profile and a sine function.
Graphically, the exact solution can be described as a moving wave along the line 
$y = 1 - 2(x-a)/(b-a)$, 
which propagates as the parameter $q$ varies (see Fig.~\ref{fig:exact_sol_8qs}).
The parameter $k$ is the wave number in the sine 
component of the exact solution. 
The 
parameter $\sigma$ is the standard deviation of the Gaussian 
component of the exact solution.
By varying the parameter $q$, we obtain different profiles in the exact solution~\eqref{eqn:manu_soln}, which we use as snapshots in the ROM basis construction.

In our numerical investigation, we use the following parameter values:
We set the computational domain $\Omega = [-4,4]$, i.e., $a = -4$ and $b = 4$ in~\eqref{Steady_Burgers}.
We choose the boundary conditions $u(-4) = 1, u(4) = -1$, i.e., $\alpha = 1$ and $\beta = -1$ in~\eqref{Steady_Burgers}.
We also set  $k = 1$ and $\sigma = 0.5$ in the exact solution~\eqref{eqn:manu_soln}.
To generate the snapshots, in the exact solution~\eqref{eqn:manu_soln} we use $q \in [-4,4]$ with increments of $1/100$, which yields $801$ snapshots. 
To illustrate the qualitative differences in the snapshots, in Figure~\ref{fig:exact_sol_8qs} we plot several representative exact solutions~\eqref{eqn:manu_soln} for different $q$ values.

\begin{figure}[!htb]
    \centering
    \includegraphics[scale=0.5]{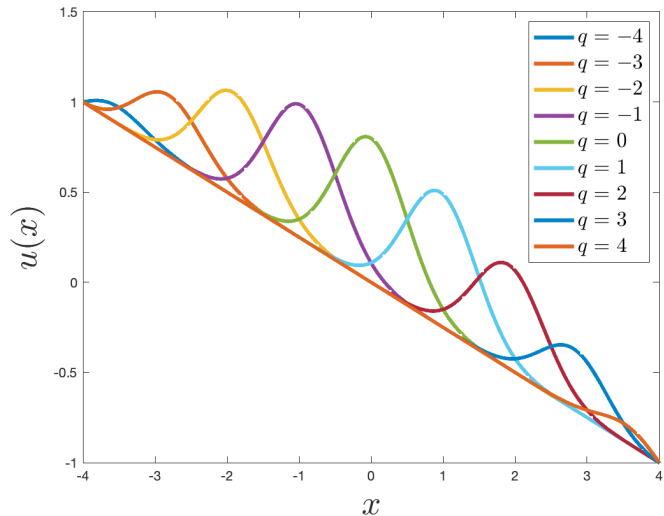}
    \caption{
    	Exact solutions 
    	\eqref{eqn:manu_soln} of the steady viscous Burgers problem \eqref{Steady_Burgers} 
    	for varying $q$ values.
    	\label{fig:exact_sol_8qs}
    }
\end{figure}

\subsection{Reduced Order Model}
    \label{sec:numerical-results-rom}

\subsubsection{Basis Construction}

To generate the POD basis, we first take the $801$ exact solutions that were obtained by varying $q$, and subtract the corresponding average true solutions, $u_{mean}$, for all $q$ parameters.
This lifting procedure ensures that the resulting snapshots (and the POD basis functions obtained from them) satisfy homogeneous Dirichlet boundary conditions~\cite{volkwein2013proper}.
Next, for practical convenience, we interpolate these lifted snapshots onto a quadratic finite element mesh of mesh size $h = 1/200$ over the domain $\Omega = [-4,4]$, which yields snapshot vectors of dimension $3201$.
The rank of the resulting snapshot matrix is $30$.
Finally, we use the POD algorithm~\cite{volkwein2013proper} to generate the POD basis $\{\varphi_1, \varphi_2, \ldots, \varphi_{30}\}$.
For illustrative purposes, in Figure~\ref{fig:POD_Basis_Funcs} we plot the POD basis functions $\varphi_{1}$ and $\varphi_{30}$.

\begin{figure}[!htb]
    \centering
    \includegraphics[scale=0.5]{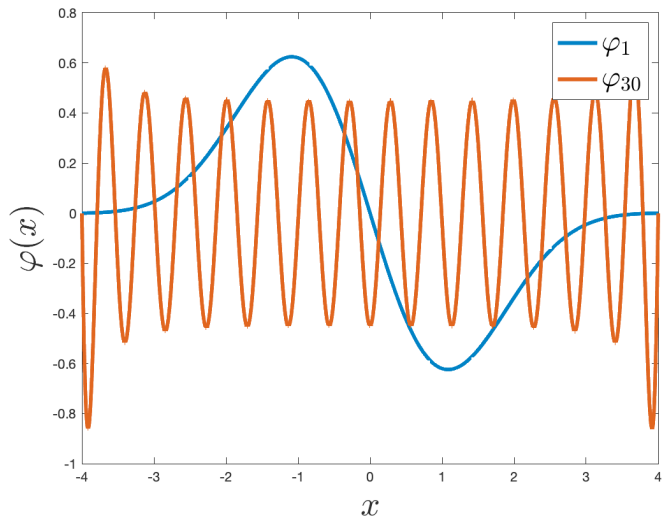}
    \caption{
    	POD basis functions $\varphi_{1}$ and $\varphi_{30}$ 
    	constructed from the manufactured solutions \eqref{eqn:manu_soln} as $q$ is varied. 
    	\label{fig:POD_Basis_Funcs}
    }
\end{figure}
 
 \subsubsection{ROM System}

In this section, we present the construction of the ROM operators for the new 2L-ROM, outlined in Algorithm \ref{alg:two_level_ROM}.
In Algorithm~\ref{alg:burgers_two_level_ROM}, we outline the construction of the 2L-ROM operators for the steady Burgers equation, which is the mathematical model used in the numerical investigation of the 2L-ROM and 1L-ROM in Section~\ref{sec:numerical-results}.

 \begin{algorithm}[H]
\caption{Steady Burgers Two-Level ROM Algorithm}
\label{alg:burgers_two_level_ROM}
Step 1. 
Use Newton's method to solve the following nonlinear, 
$r$-dimensional problem, with $r$ POD basis
functions, $\{\varphi_1,\varphi_2,\ldots, \varphi_r\}$: 
\begin{align}
    \label{eqn:burgers_Alg_Nonlin_2L}
    A^ra^r + (a^{r})^TB^ra^r + b^r = 0, 
\end{align}
where $a^r$ is the $r \times 1$ vector of unknown ROM coefficients (corresponding to $u^r \in X^r$), $A^r$ is an $r \times r$ matrix, $B^r$ is an $r \times r \times r$ tensor, and $b^r$ is a $r \times 1$ vector.
The ROM operators in~\eqref{eqn:burgers_Alg_Nonlin_2L} are given by the following formulas: 
For $1 \leq i,j,k \leq r$,
\begin{align*}
    A_{ij}^r & = 
    \nu \, (\nabla \varphi_j , \nabla \varphi_i), \\
    B_{ijk}^r & = 
    (\varphi_j(\varphi_k)_x,\varphi_i), \\
    b_{i}^r & = - (f,\varphi_i).
\end{align*}
Step 2. Solve the following linear, 
$R$-dimensional problem, with $R$ POD basis functions, $\{\varphi_1, \varphi_2, \ldots, \varphi_R\}$: 
\begin{align}
    \label{eqn:burgers_Alg_lin_2L}
    A^R a^R 
    + (\tilde{a}^r)^T B^R a^R 
    + (a^R)^T B^R \tilde{a}^r 
    -(\tilde{a}^r)^T B^R \tilde{a}^r 
    + b^R
    = 0.
\end{align}
where $\tilde{a}^r = [a^r,0,\ldots,0]^T$ is an $R \times 1$ vector constructed by using the vector $a^r$ from Step 1, $a^R$ is the $R \times 1$ vector of unknown ROM coefficients (corresponding to $u^R \in X^R$), $A^R$ is an $R \times R$ matrix, $B^R$ is an $R \times R \times R$ tensor, and $b^R$ is an $R \times 1$ vector.
The ROM operators in~\eqref{eqn:burgers_Alg_Nonlin_2L} are given by the following formulas: 
For $1 \leq i,j,k \leq R$,
\begin{align*}
    A_{ij}^R & = 
    \nu \, (\nabla \varphi_j , \nabla \varphi_i), \\
    B_{ijk}^R & = (\varphi_j(\varphi_k)_x,\varphi_i), \\
    b_{i}^R & = - (f,\varphi_i).
\end{align*}
\end{algorithm}

\subsubsection{Newton Solver Initial Guesses} 

To solve the nonlinear problems in the 2L-ROM and 1L-ROM, we use a Newton solver with a residual and step tolerance of $1\mathrm{E-}10$. In order to accurately depict numerical results, we use three different initial guesses, $u_0$.  

The first initial guess we consider, $u_{UG}$, 
is called the uninformed guess. The uninformed guess 
represents a blind but poor initial guess that takes the largest number of Newton steps in order to converge to the solution. 
This case is most similar to more difficult problems that require many Newton iterations to solve. 

The second initial guess we consider, 
$u_{IG}$, 
is called the informed guess. The informed guess replicates a guess that is based on a loose understanding of what the exact solution approximately is, and so we will still have to take several Newton steps, but not as many as in the uninformed guess. 

The third initial guess, $u_{avg}$, is called the best guess. 
In this case, we already know 
what the exact solution is, and thus this guess will require the fewest Newton iterations. 
The best guess, 
$u_{avg}$, is equal to the average of all of the exact solutions that were used to generate the POD basis. 

For the manufactured solution~\eqref{eqn:manu_soln}, we 
define the initial guesses 
$u_{UG}$, $u_{IG}$, and $u_{avg}$ by their corresponding ROM coefficient vectors $a_{UG}$, $a_{IG}$, and $a_{avg}$, respectively, as follows: 
\begin{align}
    \label{eqn:uninformed_guess}
    a_{UG} & = 
    \begin{cases}
    [1,-1,1,\ldots,1,-1,0,0]^T \quad \text{if $r$ is even} \\
    [1,-1,1,\ldots,-1,1,0,0]^T \quad \text{if $r$ is odd} \\
    \end{cases} \\
    a_{IG} & = 
    \frac{1}{2}a_{UG} \\
    a_{avg} & = [0,\ldots, 0]^T.
\end{align}

The choice of $u_{UG}$ was 
made to push the Newton solver to the limit. 
The vectors $[1,-1,1,\ldots]^T$ and $[1,-1,1,\ldots,0]^T$ both yielded diverging Newton iterations for some parameter values $p$ when paired with specific $r$ values of interest. Thus, we ended up with $u_{UG}$ in (\ref{eqn:uninformed_guess}), because the Newton solver converges for all parameter $p$ values for all $r$ values of interest. Note that $u_{avg}= 0 $ because we lifted our exact solution before we found the POD basis. 
Notice that $u_{IG}$ is exactly halfway between $u_{UG}$ and $u_{avg}$ both in the ROM space and the when projected and lifted back into the full order space. 
We 
display the different 
initial guesses 
in Figure \ref{fig:ROM_init_guess} for $r = 16$ and $r = 25$. 

\begin{figure}
\begin{subfigure}[h]{0.5\linewidth}
\includegraphics[width=\linewidth]{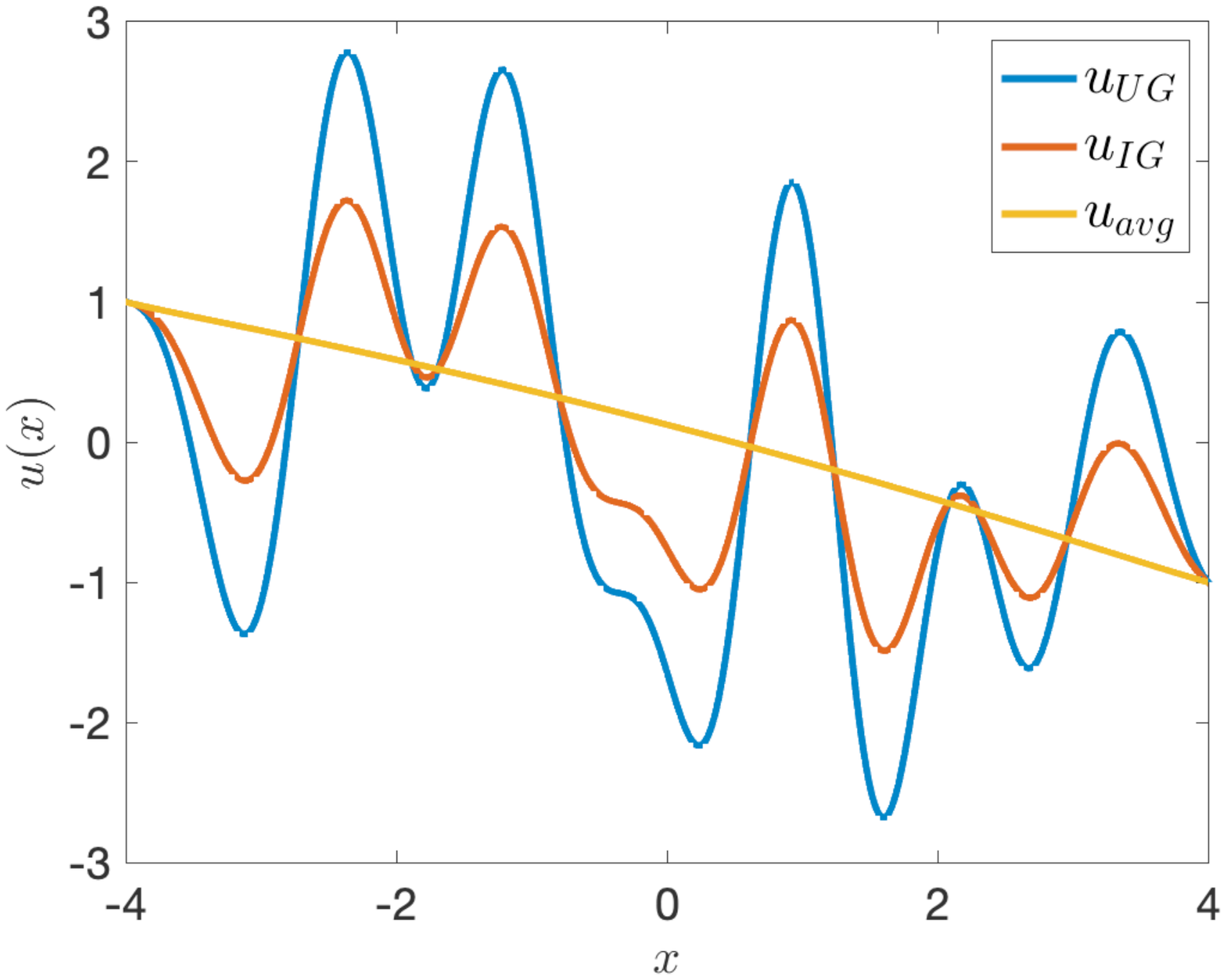}
\caption{$r = 16$}
\end{subfigure}
\hfill
\begin{subfigure}[h]{0.5\linewidth}
\includegraphics[width=\linewidth]{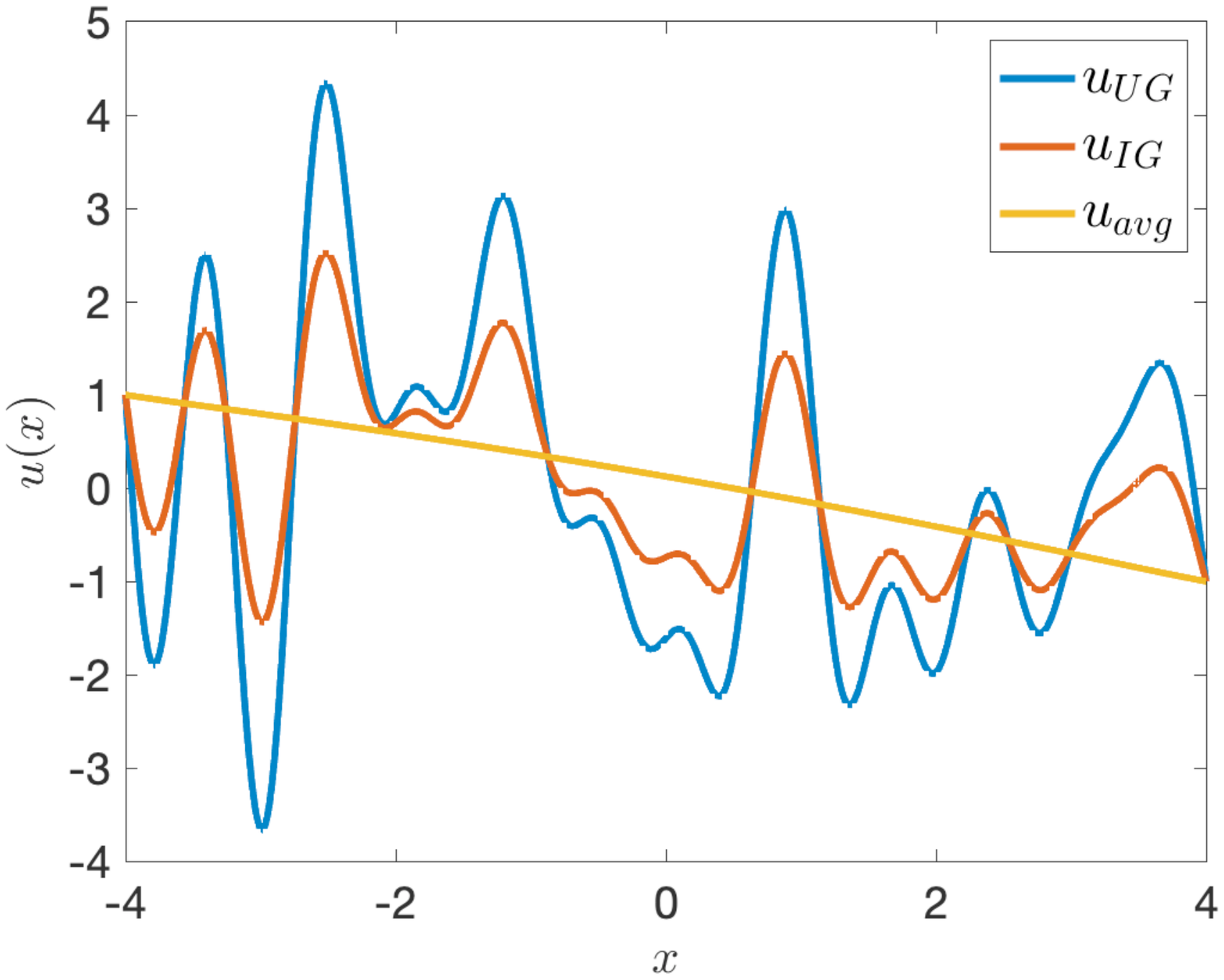}
\caption{$r = 25$}
\end{subfigure}%
\caption{ROM initial guesses for two $r$ values.}
\label{fig:ROM_init_guess}
\end{figure}

\subsubsection{Criteria for the Investigation}
    \label{sec:numerical-results-criteria}
    
All models were run on a Macbook pro with a 6-core processor. We ran each model $100$ times and averaged the time it took to complete those 
simulations. 

We compare the 2L-ROM to the 
1L-ROM. 
The 2L-ROM uses $r$ modes in the nonlinear solver and $R_2$ modes (with $r < R_2$) in the linear solver, whereas the 1L-ROM uses $R_1$ modes in the nonlinear solver. 
For the 2L-ROM we say we are using an $(r,R_2)$ pair to designate how many modes are being used for the nonlinear solver and the linear solver, respectively. 
Since we have three free parameters (i.e., $r, R_1$, and $R_2$), to simplify the presentation, we consider the following two experiments: 
\begin{itemize}
    \item {\bf Experiment 1}: \ 
    $r < R_1 = R_2$.
    In this experiment, we set the number of modes used in the linear solver of the 2L-ROM equal to the number of modes used in the nonlinear solver of the 1L-ROM. 
    \item {\bf Experiment 2}: \ 
    $r < R_1 < R_2$.
    In this experiment, we choose the number of modes used in the linear solver of the 2L-ROM to be greater than the number of modes used in the nonlinear solver of the 1L-ROM. 
    We choose 
    $R_1 < R_2$ because this choice increases the accuracy of the 2L-ROM for a small increase in computational cost.
\end{itemize}

We note that the 2L-ROM settings in both Experiment 1 and Experiment 2 are covered by the theoretical results in Theorem~\ref{thm:main_them}.

\subsection{
Results 
and Discussion}
    \label{sec:numerical-results-results}

In this section, we present numerical results for the new 2L-ROM and the standard 1L-ROM for the two experiments described in Section~\ref{sec:numerical-results-criteria}.
In our numerical comparison of the 2L-ROM and 1L-ROM, we focus on two criteria: numerical accuracy and computational cost.
In particular, we investigate whether the new 2L-ROM can significantly reduce the computational cost of the standard 1L-ROM without reducing its accuracy.

\subsubsection{Experiment 1 \ ($r < R_1 = R_2$)}

For clarity, in this section we use the notation $R \coloneqq R_1 = R_2$.
In Table \ref{table:Manu_Soln_L2_Errors_Times_Final}, we list the 
errors and timings 
for the 2L-ROM and 
1L-ROM for the $(r,R)$ pairings $(12,23), (14,25), (16,25), (18,25), (20,29)$ in Table \ref{table:Manu_Soln_L2_Errors_Times_Final}. 
We note that, in Table \ref{table:Manu_Soln_L2_Errors_Times_Final}, we average the errors and timings for all parameter $p$ values. In the first column of the table, we list the initial guesses, $u_{0}$, used for each model. In the next four columns, we list the error of the 2L-ROM, $\cE(2L)$, the average time in seconds it took to run the 2L-ROM, $2L$ Time, the error of the 1L-ROM, $\cE(1L)$, and the average time in seconds it took to run the 1L-ROM, $1L$ Time. 
We then list the ``Error Ratio" column, which is calculated by evaluating the ratio  $\cE(2L)/\cE(1L)$ from the same row. In the last column, we list the ``Speedup," which is calculated by evaluating the ratio of the 1L-ROM average time 
to the 2L-ROM average time.

Overall, the results in Table~\ref{table:Manu_Soln_L2_Errors_Times_Final} yield the following two conclusions regarding the new 2L-ROM and the standard 1L-ROM:
\begin{itemize}
    \item The 2L-ROM errors are similar to the 1L-ROM errors.
    \item The 2L-ROM computational cost is significantly (sometimes by a factor of $2$ or $3$) lower than the 1L-ROM computational cost.
\end{itemize}
We emphasize, however, that these 
two conclusions are conditional on whether the $(r,R)$ pair is chosen well. If we choose $r$ to be too small in comparison to $R$, we ensure computational efficiency but at the expense of the error accuracy. 
On the other hand, if we choose $r$ to be too close to $R$ the errors of the 2-ROM and 1L-ROM are nearly identical, but then the speedup will be close to $1$. 

We note that the choice of the initial guess in the Newton solver 
does not affect the accuracy of the 2L-ROM and 1L-ROM. 
We also not that, as $r$ approaches $R$, the error ratio between the 1L-ROM and 2L-ROM approaches $1$. 
This behavior is natural since we 
do not expect the 2L-ROM with the $(r,R)$ pair to 
be more accurate than the 1L-ROM 
with $R$ modes, because 
solving the $R$-dimensional nonlinear problem should yield more accurate results than solving the $R$-dimensional linearization.
We also note that, to 
achieve a 2L-ROM error similar to the 1L-ROM, 
$r$ does not have to be almost equal to $R$. 
For example, 
in the $(16,25)$ case, 
the 2L-ROM error is less than $3\%$ larger than the 1L-ROM error.
We 
also note that 
the error ratio can either go up or down depending on our choices of $r$ and $R$. 
For example, the pair $(12,23)$ produces a $27.5\%$ difference in error, but $(18,25)$ produces less than a $1\%$ difference in error. Thus, 
even for $r$ being approximately half the value of $R$, 
the 2L-ROM error can be similar to the 1L-ROM error.

Next, we investigate the computational efficiency of the 2L-ROM and 1L-ROM.
First, we note that the choice of the initial guess in the Newton solver has a significant influence on the speedup.
The reason is that the choice of the initial guess affects how many Newton steps are needed to converge to the solution. 
The more Newton iterations are needed, the better the 2L-ROM 
performs in comparison to the 1L-ROM. 
This is clearly seen for $u_0 = u_{UG}$ for all $(r,R)$ pairs in Table~\ref{table:Manu_Soln_L2_Errors_Times_Final}. The speedup ranges from $2$ to $3$ depending on the $(r,R)$ pair. 
Specifically, 
for the $(16,25)$ pair 
and $u_0 = u_{UG}$, the error 
difference is less than $3\%$ 
but 
the speedup is 
$2.966$. 
We also note that the better 
the initial guess becomes the faster the nonlinear solver converges, and thus the benefit of the 2L-ROM is lessened. 
When using an initial guess halfway between our unintuitive guess and optimal guess, $u_0 = u_{IG}$, 
the speedup 
is about $2$ while 
the error ratio is close to $1$. 
Even when using an optimal initial guess, $u_0 = u_{avg}$, the speedup is above $1$ for reasonably chosen $(r,R)$ pairs. 
Overall, the results in Table~\ref{table:Manu_Soln_L2_Errors_Times_Final} show that, for well chosen $(r,R)$ pairs, the new 2L-ROM can reduce the computational cost of the standard 1L-ROM by a factor of $2$ and even $3$, without significantly increasing the error.


\begin{table}[ht]
    \centering
    \begin{tabular}{|c|c|c|c|c|c|c|}
\hline													
\multicolumn{7}{|c|}{$L^2$ errors and times: averaged over all $q$ values} \\												
    \hline													
    \multicolumn{7}{|c|}{$(12,23)$} \\													
    \hline													
    $u_0$	&	$\cE(2L)$	&	$2L$ Time (s)	&	$\cE(1L)$	&	$1L$ Time (s)	&	Error Ratio	&	Speedup	\\
    \hline													
    $u_{UG}$	&	2.805E-04	&	7.425E-04	&	2.199E-04	&	1.958E-03	&	1.275	&	2.637	\\
    $u_{IG}$	&	2.805E-04	&	6.642E-04	&	2.199E-04	&	1.337E-03	&	1.275	&	2.012	\\
    $u_{avg}$	&	2.805E-04	&	6.054E-04	&	2.199E-04	&	9.579E-04	&	1.275	&	1.582	\\
    \hline													
    \multicolumn{7}{|c|}{$(14,25)$} \\	
    \hline													
    $u_0$	&	$\cE(2L)$	&	$2L$ Time (s)	&	$\cE(1L)$	&	$1L$ Time (s)	&	Error Ratio	&	Speedup	\\
    \hline													
    $u_{UG}$	&	6.225E-05	&	1.003E-03	&	4.714E-05	&	3.049E-03	&	1.321	&	3.041	\\
    $u_{IG}$	&	6.225E-05	&	8.054E-04	&	4.714E-05	&	1.818E-03	&	1.321	&	2.257	\\
    $u_{avg}$	&	6.225E-05	&	7.170E-04	&	4.714E-05	&	1.092E-03	&	1.321	&	1.523	\\
    \hline													
    \multicolumn{7}{|c|}{$(16,25)$} \\													
    \hline													
    $u_0$	&	$\cE(2L)$	&	$2L$ Time (s)	&	$\cE(1L)$	&	$1L$ Time (s)	&	Error Ratio	&	Speedup	\\
    \hline													
    $u_{UG}$	&	4.824E-05	&	1.013E-03	&	4.714E-05	&	3.006E-03	&	1.023	&	2.966	\\
    $u_{IG}$	&	4.824E-05	&	8.965E-04	&	4.714E-05	&	1.823E-03	&	1.023	&	2.034	\\
    $u_{avg}$	&	4.824E-05	&	8.056E-04	&	4.714E-05	&	1.101E-03	&	1.023	&	1.367	\\
    \hline													
    \multicolumn{7}{|c|}{$(18,25)$} \\													
    \hline													
    $u_0$	&	$\cE(2L)$	&	$2L$ Time (s)	&	$\cE(1L)$	&	$1L$ Time (s)	&	Error Ratio	&	Speedup	\\
    \hline													
    $u_{UG}$	&	4.719E-05	&	1.144E-03	&	4.714E-05	&	3.041E-03	&	1.001	&	2.657	\\
    $u_{IG}$	&	4.719E-05	&	1.038E-03	&	4.714E-05	&	1.818E-03	&	1.001	&	1.751	\\
    $u_{avg}$	&	4.719E-05	&	9.178E-04	&	4.714E-05	&	1.088E-03	&	1.001	&	1.186	\\
    \hline	
    \multicolumn{7}{|c|}{$(20,29)$} \\							
    \hline													
    $u_0$	&	$\cE(2L)$	&	$2L$ Time (s)	&	$\cE(1L)$	&	$1L$ Time (s)	&	Error Ratio	&	Speedup	\\
    \hline													
    $u_{UG}$	&	8.815E-07	&	1.596E-03	&	8.475E-07	&	3.309E-03	&	1.04	&	2.074	\\
    $u_{IG}$	&	8.815E-07	&	1.201E-03	&	8.475E-07	&	2.437E-03	&	1.04	&	2.028	\\
    $u_{avg}$	&	8.815E-07	&	1.081E-03	&	8.475E-07	&	1.435E-03	&	1.04	&	1.328	\\
    \hline		
    \end{tabular}
     \caption{Experiment 1: $L^2$ errors and times. 
    }
    \label{table:Manu_Soln_L2_Errors_Times_Final}
\end{table}

\subsubsection{Experiment 2 \ ($r < R_1 < R_2$)}

In Experiment 1 (in the previous section), we  compared the 2L-ROM to the 1L-ROM only for the case $R_2 = R_1$, i.e., when the dimension of the 2L-ROM linearized problem, $R_2$, was equal to the dimension of the 1L-ROM nonlinear problem, $R_1$. 
We emphasize, however, that in practice we could use different $R_2$ and $R_1$ values, which could yield 2L-ROM results that are better than the 1L-ROM results.
To investigate whether this is the case, in Table~\ref{table:Manu_Soln_L2_Errors_Times_Final_r_R1_R2} we list the same information as in Table~\ref{table:Manu_Soln_L2_Errors_Times_Final}, but for the case $r < R_1 < R_2$.
As mentioned in Section~\ref{sec:numerical-results-criteria}, we choose $R_1 < R_2$ to increase the 2L-ROM accuracy for a small increase in computational cost.
We also choose lower $r$ values than those chosen in Experiment 1.
This choice decreased the computational cost of the 2L-ROM. 
Furthermore, the combination $r < R_1 < R_2$ also ensured an overall increase in the 2L-ROM accuracy.  

Overall, the results in Table~\ref{table:Manu_Soln_L2_Errors_Times_Final_r_R1_R2} show that the {\it 2L-ROM can significantly decrease both the 1L-ROM error and the 1L-ROM computational cost}. 
For example, for $(r,R_2) = (20,29)$ and $R_1 = 25$, the 2L-ROM error is $53$ times lower than the 1L-ROM's error and the speedup is larger than $1$ for all three initial guesses.

\begin{table}[ht]
    \centering
    \begin{tabular}{|c|c|c|c|c|c|c|}
    \hline													
    \multicolumn{7}{|c|}{$L^2$ errors and times: averaged over all $q$ values} \\													
    \hline													
    \multicolumn{7}{|c|}{$r < R_1< R_2$, with 2L using $(r,R_2)$ and 1L using $R_1$ } \\													
    \hline													
    &		\multicolumn{2}{|c|}{$(r,R_2) = (10,24)$} &				\multicolumn{2}{|c|}{$R_1 = 21$} 	&	& \\	
    \hline													
    $u_0$	&	$\cE(2L)$	&	$2L$ Time (s)	&	$\cE(1L)$	&	$1L$ Time (s)	&	Error Ratio	&	Speedup	\\
    \hline													
    $u_{UG}$	&	6.352E-04	&	6.571E-04	&	7.420E-04	&	1.494E-03	&	0.856	&	2.274	\\
    $u_{IG}$	&	6.352E-04	&	5.985E-04	&	7.420E-04	&	9.873E-04	&	0.856	&	1.65 	\\
    $u_{avg}$	&	6.352E-04	&	5.418E-04	&	7.420E-04	&	8.244E-04	&	0.856	&	1.522	\\
    \hline													
    &		\multicolumn{2}{|c|}{$(r,R_2) = (11,23)$} 	&			\multicolumn{2}{|c|}{$R_1 = 21$} 				&		&   \\	
    \hline													
    $u_0$	&	$\cE(2L)$	&	$2L$ Time (s)	&	$\cE(1L)$	&	$1L$ Time (s)	&	Error Ratio	&	Speedup	\\
    \hline													
    $u_{UG}$	&	5.400E-04	&	6.636E-04	&	7.420E-04	&	1.494E-03	&	0.728	&	2.251	\\
    $u_{IG}$	&	5.400E-04	&	6.020E-04	&	7.420E-04	&	9.873E-04	&	0.728	&	1.64	\\
    $u_{avg}$	&	5.400E-04	&	5.704E-04	&	7.420E-04	&	8.244E-04	&	0.728	&	1.445	\\
    \hline													
    &		\multicolumn{2}{|c|}{$(r,R_2) = (12,24)$} 	&			\multicolumn{2}{|c|}{$R_1 = 23$} 				&		&	\\
    \hline													
    $u_0$	&	$\cE(2L)$	&	$2L$ Time (s)	&	$\cE(1L)$	&	$1L$ Time (s)	&	Error Ratio	&	Speedup	\\
    \hline													
    $u_{UG}$	&	1.670E-04	&	7.235E-04	&	2.199E-04	&	1.905E-03	&	0.759	&	2.633	\\
    $u_{IG}$	&	1.670E-04	&	6.623E-04	&	2.199E-04	&	1.327E-03	&	0.759	&	2.004	\\
    $u_{avg}$	&	1.670E-04	&	5.897E-04	&	2.199E-04	&	9.491E-04	&	0.759	&	1.609	\\
    \hline													
    &		\multicolumn{2}{|c|}{$(r,R_2) = (14,27)$} 	&			\multicolumn{2}{|c|}{$R_1 = 25$} 				&		&	\\
    \hline													
    $u_0$	&	$\cE(2L)$	&	$2L$ Time (s)	&	$\cE(1L)$	&	$1L$ Time (s)	&	Error Ratio	&	Speedup	\\
    \hline													
    $u_{UG}$	&	3.713E-05	&	7.264E-04	&	4.714E-05	&	2.992E-03	&	0.788	&	4.12	\\
    $u_{IG}$	&	3.713E-05	&	7.267E-04	&	4.714E-05	&	1.795E-03	&	0.788	&	2.469	\\
    $u_{avg}$	&	3.713E-05	&	7.428E-04	&	4.714E-05	&	1.070E-03	&	0.788	&	1.441	\\
    \hline													
    &		\multicolumn{2}{|c|}{$(r,R_2) = (14,28)$} 	&			\multicolumn{2}{|c|}{$R_1 = 25$} 				&		&\\	
    \hline													
    $u_0$	&	$\cE(2L)$	&	$2L$ Time (s)	&	$\cE(1L)$	&	$1L$ Time (s)	&	Error Ratio	&	Speedup	\\
    \hline													
    $u_{UG}$	&	3.634E-05	&	1.003E-03	&	4.714E-05	&	2.992E-03	&	0.771	&	2.983	\\
    $u_{IG}$	&	3.634E-05	&	8.386E-04	&	4.714E-05	&	1.795E-03	&	0.771	&	2.14	\\
    $u_{avg}$	&	3.634E-05	&	7.315E-04	&	4.714E-05	&	1.070E-03	&	0.771	&	1.463	\\
    \hline
    &		\multicolumn{2}{|c|}{$(r,R_2) = (20,29)$} &				\multicolumn{2}{|c|}{$R_1 = 25$} 				&		&	\\
    \hline													
    $u_0$	&	$\cE(2L)$	&	$2L$ Time (s)	&	$\cE(1L)$	&	$1L$ Time (s)	&	Error Ratio	&	Speedup	\\
    \hline													
    $u_{UG}$	&	8.815E-07	&	1.579E-03	&	4.714E-05	&	2.992E-03	&	0.019	&	1.895	\\
    $u_{IG}$	&	8.815E-07	&	1.193E-03	&	4.714E-05	&	1.795E-03	&	0.019	&	1.504	\\
    $u_{avg}$	&	8.815E-07	&	1.049E-03	&	4.714E-05	&	1.070E-03	&	0.019	&	1.02	\\
    \hline
    \end{tabular}
    \caption{
    Experiment 2: $L^2$ errors and times.
    }
    \label{table:Manu_Soln_L2_Errors_Times_Final_r_R1_R2}
\end{table}

\section{Conclusions}
    \label{sec:conclusions}

In this paper, we proposed, analyzed, and investigated numerically a novel two-level G-ROM (2L-ROM) for the efficient and accurate numerical simulation of the steady NSE.
The new 2L-ROM consists of two steps:
In the first step, a relatively low-dimensional G-ROM for the (nonlinear) NSE is solved.
In the second step, the NSE are linearized around the solution found in the first step, and the higher-dimensional G-ROM for the linearized system is solved.
In Theorem~\ref{thm:main_them}, we proved an error bound for the new 2L-ROM.
In Section~\ref{sec:numerical-results}, we compared the new 2L-ROM to the standard 1L-ROM in the numerical simulation of the steady Burgers equation.
Our numerical investigation showed that, as expected, the 2L-ROM could significantly decrease (by a factor of $2$ and even $3$) the 1L-ROM computational cost, without compromising the numerical accuracy.

The first steps in the investigation of the new 2L-ROM are encouraging.
We plan to investigate the 2L-ROM in more challenging numerical settings, e.g., the 2D and 3D NSE.
We also plan to leverage the theoretical error bounds to determine robust scalings for the 2L-ROM parameters $r$ and $R_2$.

\section*{Acknowledgments}

The work of the first, second, and fifth authors was supported by NSF through grant DMS-2012253 and CDS\&E-MSS-1953113. The work of the third author was supported by NSF through grant DMS-2108856. The work of the fourth author was supported through grant DOE ASCR DE-SC0021313.

\bibliographystyle{plain}
\bibliography{traian}

\end{document}